\newcommand{\lp}{\left(}
\newcommand{\rp}{\right)}
\newcommand{\Z}{\mathbb{Z}}
\newcommand{\Q}{\mathbb{Q}}
\newcommand{\ord}{\operatorname{ord}}
\newcommand{\SL}{\operatorname{SL}}
\newcommand{\GL}{\operatorname{GL}}
\newtheorem{thm}{Theorem}[section]
\newtheorem{lem}[thm]{Lemma}
\newtheorem{prop}[thm]{Proposition}
\newtheorem{example}[thm]{Example}
\newtheorem{conjecture}[thm]{Conjecture}
\newtheorem*{remark}{Remark}
\newcommand{\sk}{\big|_k }
\newcommand{\pMatrix}[4]{\left(\begin{matrix}#1 & #2 \\ #3 & #4\end{matrix}\right)}
\renewcommand{\pmatrix}[4]{\left(\begin{smallmatrix}#1 & #2 \\ #3 & #4\end{smallmatrix}\right)}
\newcommand{\tx}{\text}
\renewcommand{\(}{\left(}
\renewcommand{\)}{\right)}
\newcommand\be{\begin{equation}}
\newcommand\ee{\end{equation}}
\newcommand\bee{\begin{equation*}}
\newcommand\eee{\end{equation*}}
\theoremstyle{remark}
\theoremstyle{definition}
\begin{document}
\title{Theta-type congruences for colored partitions}


\author{O. Beckwith, A. Caione, J. Chen, M. Diluia, O. Gonzalez, J. Su }
\address{}
\curraddr{}
\email{}
\thanks{}

\date{}

\maketitle
\begin{abstract}
We investigate congruence relations of the form $p_r(\ell m n + t) \equiv 0 \pmod{\ell}$ for all $n$, where $p_r(n)$ is the number of $r$-colored partitions of $n$ and $m,\ell$ are distinct primes. 
\end{abstract}

\section{Introduction}
A partition of an integer $n$ is a non-increasing sequence of positive integers (called the ``parts" of the partition) whose sum is $n$. The partition function $p(n)$ is defined as the number of partitions of $n$.  In 1921, Ramanujan \cite{ram_cong_mathz} showed that for all $n \in \mathbb{Z}$,

\begin{gather}\label{eq:RamanujanCongruences}
p(5n+4) \equiv 0 \pmod{5} \\
p(7n+5) \equiv 0 \pmod{7} \\
p(11n+6) \equiv 0 \pmod{11}.
\end{gather}
Ramanujan's discoveries sparked interest among number theorists in the arithmetic properties of $p(n)$, as well as similar partition functions. An $r$-colored partition of $n$ is a partition of $n$ for which each part is assigned one of a fixed set of $r$ colors.  We let $p_r(n)$ denote the number of $r$-colored partitions of $n$ for positive integers $n$.  Congruences for $p_r(n)$ modulo powers of 11 similar to Ramanujan's congruences were found by Gordon \cite{gordon}. Kiming and Olsson \cite{KimingOlsson}, Boylan \cite{boylan-exceptional}, and Dawsey and Wagner \cite{DawseyWagner} discovered families of congruences of the form \begin{equation}\label{eq:RamType}
p_r(\ell n +t )\equiv 0 \pmod{\ell}
\end{equation}
for primes $\ell$. We refer to congruences of the form (\ref{eq:RamType}) where $\ell$ is prime as \emph{Ramanujan-type congruences}.  



Ahlgren and Boylan \cite{Ahlgren-Boylan} showed that $p(n)$ has no Ramanujan-type congruences other than (\ref{eq:RamanujanCongruences})-(3). On the other hand, the work of Ono and Ahlgren (\cite{Ono_annals}, \cite{Ahlgren-Ono}) shows that $p(n)$ has many congruences of the form $p(An+t) \equiv 0 \pmod{\ell}$. Treneer \cite{Treneer_1} extended this to all integer sequences which arise as Fourier coefficients of a weakly holomorphic modular form, including the functions $p_r(n)$. These results show that for any $r \in \mathbb{Z}$ and prime $\ell \ge 5$, there are infinitely many primes $Q$ such that for some $t \in \mathbb{Z}$, one has
\begin{equation*}
p_r(\ell Q^4n+t) \equiv 0 \pmod{\ell}
\end{equation*}
for all $n\in \mathbb{Z}$. Despite all this progress, there is no known explicit classification of all the congruences of $p_r(n)$. Finding new types of congruences, and showing various types of congruences do not exist, remains an active area of research.  

Ahlgren, Raum, and the first author \cite{ABR} investigated congruences of the form
\begin{equation}\label{eq:squarefree}p_r(\ell mn+t) \equiv 0 \pmod{\ell} \end{equation} 
with $r=1$ and $m$ cube-free and co-prime to $\ell$. Of course, if $\ell \in \{5,7,11 \}$ and $r=1$, then one has (\ref{eq:squarefree}) for any $m$ by the Ramanujan Congruences (\ref{eq:RamanujanCongruences})-(3).  When $r=1$, \cite{ABR} shows that there are no non-trivial instances of \ref{eq:squarefree} where $\ell ,m < 10,000$ and $m$ is prime, and they proved that for any $\ell \ge 5$, congruences of the type (\ref{eq:squarefree}) are rare in a precise sense unless a certain modular form has many coefficients divisible by $\ell$.
 
 We investigate congruences of the form (\ref{eq:squarefree}) for $r>1$ and $m$ prime, and we call such congruences \emph{theta-type congruences}. We say that a theta-type congruence is nontrivial if it does not follow from a Ramanujan-type congruence. Keeping with the method of \cite{ABR} and other papers on partition congruences, we use the following generating functions:
 $$
f_{r,\ell, 0} (z) := \sum_N p_r \lp \frac{\ell N +r}{24} \rp q^{\frac{N}{24}} =  q^{ - \frac{r \ell}{24}} \sum_{n \in \Z} p_r \left(\ell n -  \frac{r(\ell^2 -1)}{24} \right) q^{n}
$$
and
$$
f_{r,\ell, -1} (z) :=\sum_{\lp \frac{-rN}{\ell} \rp = -1} p_r \lp \frac{ N +r}{24} \rp q^{\frac{N}{24}} = \sum_{\substack{0 \le t \le \ell -1 \\ \lp \frac{r-24t}{\ell} \rp \lp \frac{r}{\ell} \rp = -1 }} q^{t - \frac{r}{24}} \sum_{n \in \Z} p_r(\ell n +t) q^{\ell n},
$$
where throughout, we set $p_r(x) := 0$ if $x$ is not a non-negative integer and $p_r(0):=1$. 

We find that, in contrast to the $r=1$ situation, there many nontrivial theta-type congruences for several values of $r>1$.
\begin{thm}\label{thm:main}
For $r_1, \ell$ as in Table \ref{Tab:etacongs}, we have either the Ramanujan-type congruence \begin{equation}\label{eq:Ram1}p_{r_1} \left(\ell n -  \frac{r_1(\ell^2 -1)}{24} \right) \equiv 0 \pmod{\ell}, \end{equation}  or we have a nontrivial theta-type congruence for every prime $m \neq \ell $ and integer $t$ satisfying $t \equiv r_1/24 \pmod{\ell}$ and 
$$
\left( \frac{24t-r_1}{m} \right) = - \left( \frac{\ell}{m} \right).
$$
Similarly, for $r_2, \ell$ as in Table \ref{Tab:etacongs}, we have either (\ref{eq:Ram1}), or we have a nontrivial theta-type congruence for every prime $m \nmid 3\ell $ and integer $t$ satisfying $t \equiv r_2/24 \pmod{\ell}$ and 
$$
\left( \frac{24t-r_2}{m} \right) = - \left( \frac{3\ell}{m} \right).
$$

\end{thm}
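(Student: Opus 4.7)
The plan is to recast the two generating functions as reductions of half-integral weight modular forms, show that for the pairs $(r,\ell)$ in Table~\ref{Tab:etacongs} these forms collapse modulo $\ell$ to theta series, and then extract the divisibilities from the action of the Hecke operators on theta series. This is in the spirit of \cite{ABR}, but specialized to the setting where an $\eta$-power congruence is available.

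First I would make both generating functions modular. Using the standard transformation of $\eta(24z)$, the series $f_{r,\ell,0}(z)\cdot\eta(24z)^{r}$ is (after applying $U_\ell$ and $V_\ell$ as needed) a holomorphic modular form on a congruence subgroup of level dividing $576\ell$, and similarly $f_{r,\ell,-1}(z)$ — which by definition is the projection of $\eta(24z)^{-r}$ onto a coset of exponents selected by the Legendre symbol $\left(\frac{-r\,\cdot}{\ell}\right)$ — becomes holomorphic after multiplication by $\eta(24z)^{r}$ and a Hecke-type operator. The two cases $r_1$ and $r_2$ in Table~\ref{Tab:etacongs} should correspond to the two natural characters that can appear: one with conductor a power of $\ell$ (giving the $\left(\frac{\ell}{m}\right)$ factor), and one with an additional twist by the character of conductor $3$ (giving the $\left(\frac{3\ell}{m}\right)$ factor).

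Next, and this is the heart of the argument, I would use the entries of Table~\ref{Tab:etacongs} as congruences of the form $\eta(24z)^{r}\equiv\Theta(z)\pmod{\ell}$ where $\Theta(z)$ is a theta series $\sum_n\chi(n)q^{n^{2}}$ (or $q^{3n^{2}}$, in the $r_{2}$ case) for an explicit quadratic character $\chi$. Classical examples — the Euler pentagonal identity and the Jacobi identity — should generalize to the required mod-$\ell$ identifications for the listed $(r,\ell)$. Substituting into Step~1, the mod-$\ell$ reduction of the holomorphic form built from $f_{r,\ell,0}$ is either identically zero — giving the Ramanujan-type congruence (\ref{eq:Ram1}) — or is a nonzero theta series; in the latter case the corresponding reduction of $f_{r,\ell,-1}$ is also a theta series supported on the complementary arithmetic progressions mod $\ell$.

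Finally, I would apply the Hecke operator $T_{m^{2}}$ of the appropriate half-integral weight. On a theta series $\sum_n\chi(n)q^{Dn^{2}}$ with $m\nmid D\ell$, a direct computation shows that the $m n$-th coefficient vanishes modulo $\ell$ whenever $\left(\frac{D n}{m}\right)$ has the sign opposite to $\chi(m)$. Translating the condition $\chi(m)=\pm 1$ back in terms of $\left(\frac{\ell}{m}\right)$ (or $\left(\frac{3\ell}{m}\right)$), the hypothesis $\left(\frac{24t-r}{m}\right)=-\left(\frac{\ell}{m}\right)$ is exactly the condition making the relevant coefficient sit in the vanishing progression; pulling the statement back through the definition of $f_{r,\ell,-1}$ yields $p_{r}(\ell m n + t)\equiv 0\pmod{\ell}$ for every $n$. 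The main obstacle is Step~2: verifying the $\eta$-power congruences that underlie Table~\ref{Tab:etacongs}, since everything downstream is a routine consequence once a theta identification is in hand, and so the arithmetic content of the theorem is concentrated in the explicit identities for $\eta(24z)^{r}\bmod\ell$.
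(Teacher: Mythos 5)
Your overall shape is right --- identify the generating function modulo $\ell$ with a theta series and then read the congruences off its quadratic support --- but the proposal leaves the entire content of the theorem unproved, and the object you propose to identify with a theta series is the wrong one. You posit congruences of the form $\eta(24z)^{r}\equiv\Theta(z)\pmod{\ell}$ and hope the Euler and Jacobi identities ``generalize.'' For the values of $r$ in Table \ref{Tab:etacongs} (e.g.\ $r_1=3\ell-4$), $\eta^{r}$ has filtration $r/2$ and is \emph{not} congruent to a theta series modulo $\ell$; the form that collapses is $f_{r,\ell,0}\equiv\bigl(\Delta^{r(\ell^2-1)/24}\big|U_\ell\bigr)/\eta^{r\ell}$ (Lemma \ref{thm:genfcn0}), and the $U_\ell$ is doing the real work. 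Establishing that this is $\equiv\alpha\eta$ or $\alpha\eta^{3}\pmod{\ell}$ is exactly Theorem \ref{thm:etafamily}, the heart of the matter: one bounds the filtration of $\Delta^{r(\ell^2-1)/24}\big|U_\ell$ via Serre's lemma (Lemma \ref{thm:filtration}), uses the Miller basis to show its order of vanishing mod $\ell$ exceeds $r\ell/24$, and concludes that $f_{r,\ell,0}$ lies in the one-dimensional space $S_{1/2}(\nu_\eta)$ (resp.\ $S_{3/2}(\nu_\eta^{3})$) spanned by $\eta$ (resp.\ $\eta^{3}$) by Lemma \ref{thm:1dspaces}; the congruence conditions on $\ell$ in the table are precisely what make the weight land at $1/2$ or $3/2$. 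You flag this step as ``the main obstacle,'' and it is: without it there is no proof.

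Two further points. The factor $\left(\frac{3\ell}{m}\right)$ in the $r_2$ case does not come from a character twist of conductor $3$; it comes from $\eta^{3}$ being supported on exponents $3n^{2}/24$, so that $24t-r=3\ell n^{2}$. And the concluding step needs no Hecke operator $T_{m^2}$ (nor any analysis of $f_{r,\ell,-1}$, which is irrelevant to this theorem): once $f_{r,\ell,0}\equiv\alpha\eta\pmod{\ell}$ with $\ell\nmid\alpha$, any $t\equiv r/24\pmod{\ell}$ with $p_r(t)\not\equiv 0\pmod{\ell}$ forces $24t-r=\ell n^{2}$, hence $\left(\frac{24t-r}{m}\right)\in\left\{0,\left(\frac{\ell}{m}\right)\right\}$ for every prime $m\neq\ell$; since this symbol depends only on $t\bmod m$ and the condition $t\equiv r/24\pmod{\ell}$ only on $t\bmod\ell$, the hypothesis $\left(\frac{24t-r}{m}\right)=-\left(\frac{\ell}{m}\right)$ kills $p_r(t')$ mod $\ell$ for every $t'\equiv t\pmod{\ell m}$, which is the asserted congruence (and if instead $\ell\mid\alpha$ one gets (\ref{eq:Ram1})). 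Your $T_{m^2}$ computation would likely recover the same conclusion, but it is machinery the direct support argument makes unnecessary.
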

\begin{remark}
Out of the 775 pairs $(r_1,\ell)$ with $5 \le \ell \le 349$ in Table \ref{Tab:etacongs}, we have a Ramanujan-type congruence for 3. The other 772 pairs produce infinitely many nontrivial theta-type congruences. See the Mathematica \cite{Mathematica} code on the first author's webpage for details. \end{remark}
\begin{example}
For $\ell =7$, $r=17$, we do not have (\ref{eq:Ram1}) because $p_{17} ( 1) = 17 \not\equiv 0 \pmod{7}$. Theorem \ref{thm:main} implies that we have (\ref{eq:squarefree}) for every prime $m \neq 7$ and $t \equiv 1 \pmod{7}$ such that 
$$
\left( \frac{24t-17}{m} \right) = - \left( \frac{7}{m} \right).
$$
\end{example}

\begin{center}
\captionof{table}{\label{Tab:etacongs}}
 \begin{tabular}{||c | c| c||} 
 \hline
$\ell$ & $r_1$ & $r_2$  \\
 \hline
$\ell \equiv 1 \pmod{6}$,   $\ell \ge 5$ & $r_1 = 3 \ell - 4$    & $r_2 =  \ell - 4$   \\
\hline
 $\ell \equiv 1 \pmod{4}$, $\ell \ge 5$  & $r_1 = 5 \ell -6$ & $r_2 = 3 \ell -6$ \\
\hline
$\ell \equiv 1 \pmod{3}$, $\ell \ge 5$ & $r_1 = 7 \ell -8$ & $r_2 = 5 \ell -8$  \\
\hline
$\ell \equiv 1 \pmod{12}$, $\ell \ge 5$ & $r_1 = 9 \ell -10$  & $r_2 = 7 \ell -10$  \\
\hline
 $\ell \ge 5$ & $r_1 = 11 \ell -12$  & $r_2 = 9 \ell -12$  \\
\hline
 $\ell \equiv 1 \pmod{12}$, $\ell \ge 7$ & $r_1= 13 \ell -14$   & $r_2 = 11 \ell -14$ \\
\hline
$\ell \equiv 1 \pmod{3}$, $\ell \ge 7$ & $r_1  = 15 \ell -16$   & $r_2 = 13 \ell -16$ \\
\hline
$\ell \equiv 1 \pmod{4}$, $\ell \ge 11$ & $r_1 = 17 \ell -18$ & $r_2 = 15 \ell -18$  \\
\hline
$\ell \equiv 1 \pmod{6}$, $\ell \ge 11$ & $r_1 = 19 \ell -20$  & $r_2 = 17 \ell -20$\\
\hline
$\ell \equiv 1 \pmod{12}$, $\ell \ge 11$ & $r_1 = 21 \ell -22$    & $r_2 = 19 \ell -22$  \\
\hline
$\ell \ge 11$  & $r_1 = 23 \ell -24$  & $r_2 = 23 \ell -24$. \\
\hline
\end{tabular}
\end{center}

\begin{remark}
For $r=\ell-4$, Theorem 2.1 (2) \cite{boylan-exceptional} shows $f_{r,\ell,0} \equiv 0 \pmod{\ell}$ if $\ell \not\equiv 1 \pmod{6}$, and a succinct proof of this fact is also given in \cite{Dicks} (see Lemma 7.1). The method of \cite{Dicks} also gives a short proof of the $r=\ell -4$ case in Theorem \ref{thm:main}.
\end{remark}

Theorem \ref{thm:main} is a consequence of the fact that for $r,\ell$ in Table \ref{Tab:etacongs}, $f_{r,\ell,0}$ is congruent modulo $\ell$ to a theta function. More precisely, we let $\eta$ denote the Dedekind $\eta$ function:
\begin{gather*}
  \eta(z):=q^\frac1{24}\prod_{n=1}^\infty(1-q^n),
\end{gather*}
where $z$ lies in the upper half plane $\mathbb{H}$, and we let $q^n = e^{2 \pi i n z}$ for all $n \in \mathbb{Q}$.
Then we have the classical identities 
\begin{equation}\label{eq:eta3identity}
\eta^3 (z) = \sum_{n =1}^{\infty} \left( \frac{-4}{n} \right)n q^{n^2/8}
\end{equation}
and
\begin{equation}\label{eq:etaidentity}
\eta(z) = \sum_{n=1}^{\infty} \left( \frac{12}{n} \right) q^{n^2/24}.
\end{equation}
Theorem \ref{thm:main} is an immediate consequence of the following theorem:

\begin{thm}\label{thm:etafamily}
Let $\ell \ge 5$ be prime, and let $r$ be a positive integer co-prime to $\ell$. 
\begin{enumerate}
\item If $3 \le a \le \operatorname{min}\{21,2\ell+1\}$ is odd and $r$ is given by $r = a(\ell-1)-1$, then if $\ell \equiv 1 \pmod{24/(24,a+1)}$, we have
 $$f_{r,\ell,0} \equiv p_r \left(\frac{\ell + r }{24} \right) \cdot \eta \pmod{\ell}.$$
 \item  If $1 \le a \le \operatorname{min}\{19,2\ell+1\}$ is odd and $r$ is given by $r = a(\ell-1)-1$, then if $\ell \equiv 1 \pmod{24/(24,a+3)}$, we have
 $$f_{r,\ell,0} \equiv p_r \left(\frac{3\ell + r }{24} \right) \cdot \eta^3 \pmod{\ell}.$$
 \end{enumerate}
\end{thm}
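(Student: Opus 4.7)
The plan is to combine a Frobenius-type congruence on $\eta^{-r}$ with a $U_\ell$-operator factorization, reducing the theorem to an eigenform-style statement for $U_\ell$ on a small eta-power. The starting observation is the classical congruence $\eta(z)^\ell \equiv \eta(\ell z) \pmod \ell$, which follows immediately from $(1-q^n)^\ell \equiv 1 - q^{\ell n} \pmod \ell$ applied to the product formula for $\eta$. Since $-r = 1 + a - a\ell$, this yields
$$\eta(z)^{-r} \equiv \eta(z)^{a+1}\,\eta(\ell z)^{-a} \pmod \ell$$
as formal $q^{1/24}$-series.

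Next, one checks directly from the definitions that $f_{r,\ell,0} = U_\ell^{(24)}(\eta^{-r})$, where $U_\ell^{(24)}$ denotes the operator sending a $q^{1/24}$-series $\sum_N c_N q^{N/24}$ to $\sum_N c_{\ell N} q^{N/24}$. Because $\eta(\ell z)^{-a}$ is supported on $q^{1/24}$-exponents divisible by $\ell$, the standard factorization $U_\ell\bigl(g(z)\,h(\ell z)\bigr) = h(z)\,U_\ell g(z)$ applies, producing
$$f_{r,\ell,0} \equiv U_\ell^{(24)}\bigl(\eta^{a+1}\bigr)\cdot \eta^{-a} \pmod \ell.$$
Multiplying both sides by $\eta^a$ reduces Theorem~\ref{thm:etafamily} to an identity of the form
$$U_\ell^{(24)}\bigl(\eta^{a+1}\bigr)\equiv C\cdot \eta^{a+1}\pmod\ell \quad\text{(Case 1)} \qquad\text{or}\qquad U_\ell^{(24)}\bigl(\eta^{a+1}\bigr)\equiv C\cdot\eta^{a+3}\pmod\ell \quad\text{(Case 2)},$$
with $C$ the prescribed partition value.

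To establish such an identity, I would view $\eta^{a+1}$ and $\eta^{a+3}$ as holomorphic modular forms of small integer weight on an appropriate congruence subgroup of $\SL_2(\mathbb{Z})$ with the induced eta-multiplier. The hypotheses $\ell \equiv 1 \pmod{24/(24,a+1)}$ or $24/(24,a+3)$ are precisely what make the multiplier and weight compatible with the $U_\ell^{(24)}$-action, so that both sides can be compared inside a single mod-$\ell$ space of modular forms of definite weight, level, and character. The bounds $a \le 21$ and $a \le 19$ keep the weight small enough that the relevant space is low-dimensional, so by Sturm's bound, the congruence reduces to matching a finite, explicitly computable number of leading $q$-expansion coefficients. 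The constant $C$ is then forced by inspecting the lowest-order coefficient of $f_{r,\ell,0}$, which equals $p_r((\ell+r)/24)$ or $p_r((3\ell+r)/24)$, and matching it against the leading coefficient $1$ of $\eta$ or $\eta^3$ given by identities~(\ref{eq:etaidentity}) and~(\ref{eq:eta3identity}).

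The main obstacle lies in the last step: carefully identifying the ambient mod-$\ell$ space of forms of the correct weight, level, and multiplier, and verifying that the relevant eta-power actually pins down $U_\ell^{(24)}(\eta^{a+1})$ through Sturm's bound. The numerical upper bounds on $a$ are presumably the thresholds beyond which this dimension argument breaks, and handling the eleven congruence families in the table likely requires individual casework according to the value of $\gcd(24,a+1)$ or $\gcd(24,a+3)$ governing the eta-multiplier in each regime.
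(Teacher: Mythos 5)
Your reduction is valid as far as it goes, and it is essentially a transparent repackaging of the paper's Lemma~\ref{thm:genfcn0}: writing $\Delta^{r(\ell^2-1)/24}=\eta^{-r}\cdot\eta^{r\ell^2}\equiv \eta^{-r}(z)\,\eta(\ell z)^{r\ell}\pmod{\ell}$ and applying your same $U_\ell$ factorization recovers exactly $f_{r,\ell,0}=U_\ell^{(24)}(\eta^{-r})$. The identities $f_{r,\ell,0}=U_\ell^{(24)}(\eta^{-r})$, $\eta^{-r}\equiv\eta^{a+1}(z)\eta(\ell z)^{-a}\pmod{\ell}$, and the pull-out of $\eta(\ell z)^{-a}$ through $U_\ell$ are all correct. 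The problem is that the entire content of the theorem is concentrated in the step you leave as a sketch, namely $U_\ell^{(24)}(\eta^{a+1})\equiv C\,\eta^{a+1}\pmod{\ell}$, and the argument you propose for it does not work as stated. By (\ref{eq:uqaction}), $U_\ell$ sends level-one forms to forms on $\Gamma_0(\ell)$, so a Sturm-bound comparison takes place at level $\ell$ and requires on the order of $\ell$ coefficients; this is not a uniform finite check over all primes $\ell$ and cannot be carried out once per value of $a$. What is actually needed is a level-lowering and weight-bounding argument: one must show that $U_\ell^{(24)}(\eta^{a+1})$ is congruent modulo $\ell$ to a \emph{level-one cusp} form of weight $\tfrac{a+1}{2}$ with multiplier $\nu_\eta^{a+1}$, after which Lemma~\ref{thm:1dspaces} (not Sturm) gives $S_{(a+1)/2}(\nu_\eta^{a+1})=\C\,\eta^{a+1}$ and forces the conclusion. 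The paper achieves exactly this by working with the large integer-weight level-one form $\Delta^{r(\ell^2-1)/24}$, for which $U_\ell\equiv T_\ell\pmod{\ell}$ keeps the level at one, Serre's filtration bound (Lemma~\ref{thm:filtration}) pins the weight down to a value $w$ with $w-\tfrac{r\ell}{2}=\tfrac12$, and an order-of-vanishing computation plus the Miller basis shows the quotient by $\eta^{r\ell}$ is holomorphic at infinity. None of these ingredients appears in your outline, and they are precisely where the hypotheses on $a$ and on $\ell\bmod 24$ do real work beyond bookkeeping of exponents.

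A second, more concrete problem: your own Case 2 reduction is incompatible with its target. With $r=a(\ell-1)-1$ you obtain $\eta^{a}f_{r,\ell,0}\equiv U_\ell^{(24)}(\eta^{a+1})$, which is supported on exponents $N\equiv \ell(a+1)\pmod{24}$ (using $\ell^2\equiv1\pmod{24}$), whereas $\eta^{a+3}$ is supported on $N\equiv a+3\pmod{24}$; matching supports would force $(a+1)(\ell-1)\equiv2\pmod{24}$, which is impossible since both factors are even. Consistency with Table~\ref{Tab:etacongs} (e.g.\ $r_2=\ell-4$ for $a=1$) and with the paper's own assertion that $-r\ell\equiv3\pmod{24}$ shows the intended parametrization in Case 2 is $r=a(\ell-1)-3$, which yields $\eta^{-r}\equiv\eta^{a+3}(z)\eta(\ell z)^{-a}$ and the target $U_\ell^{(24)}(\eta^{a+3})\equiv C\,\eta^{a+3}$. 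Your framework would have detected this had you checked the supports, and the check is worth adding; but even with that repaired, the missing level-lowering and cuspidality arguments described above remain the essential gap.
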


There are theta-type congruences that don't follow from Theorem \ref{thm:etafamily}. The next theorem describes a few of them. To state the conditions, we need a couple definitions. For integer sequences $a(n)$, we let 
$$
\ord_{\ell} \left( \sum_{n \in \mathbb{Z}} a(n) q^{n/24} \right) := \operatorname{inf} \{ n \in\mathbb{Z}: \ell \nmid a(n) \}.
$$

We let $b(r,\ell)$ be given by
$$
b(r,\ell) := (\ell - 1) \Big\lfloor \frac{1}{\ell -1} \left( \ell + \frac{r (\ell^2- 1 ) - 2}{2 \ell } \right) \Big\rfloor - \frac{r \ell }{2}. 
$$

\begin{thm}\label{thm:abnormal}
Let $\ell >3$ be prime, and let $r$ be a positive integer not divisible by $\ell$. 
\begin{enumerate}
\item If $b(r,\ell) = \ell/2$, $\ord_{\ell}(f_{r,\ell,0}) > \ell$, and $r  \equiv -1 \pmod{24}$ , then $f_{r,\ell,0} \equiv \alpha \eta^{\ell} \pmod{\ell}$ for some $\alpha \in \mathbb{Z}$. 
\item If  $\ell^2 = r+4$, $f_{r, \ell, 0} \equiv 0 \pmod{\ell}$,  $\ord_{\ell}(f_{r,\ell,-1}) > 0$, and $r \equiv -3 \pmod{24}$, then $f_{r,\ell,-1} \equiv \alpha \eta^{3} \pmod{\ell}$ for some $\alpha \in \mathbb{Z}$.
\item If $\ell^2 = r+2$, $f_{r,\ell,0} \equiv \beta \eta^{\ell} \pmod{\ell}$ for some integer $\beta$,  $\ord_{\ell}(f_{r,\ell,-1}) > 0$, and $r \equiv -1 \pmod{24}$, then $f_{r,\ell,-1} \equiv \alpha \left( \eta - \left( \frac{12}{\ell} \right) \eta^{\ell^2}  \right) \pmod{\ell}$ for some $\alpha \in \mathbb{Z}$.
\end{enumerate}
\end{thm}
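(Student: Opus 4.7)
The plan is to treat the three parts through a common framework: realize $f_{r,\ell,0}$ (resp.\ $f_{r,\ell,-1}$) as the reduction modulo $\ell$ of a weakly holomorphic modular form of half-integral weight on a suitable congruence subgroup, bound its mod-$\ell$ filtration using the quantitative hypotheses, and then invoke the rigidity of low-weight spaces to pin the form down to a scalar multiple of an explicit theta.

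For the setup, I would recall that $\eta^{-r}(z) = q^{-r/24}\sum_n p_r(n) q^n$, so $f_{r,\ell,0}$ is obtained from $\eta^{-r}$ by a $U_\ell$-style sieve on exponents $n$ with $24n \equiv r \pmod{\ell}$, while $f_{r,\ell,-1}$ comes from the complementary Legendre-symbol sieve. After multiplying by a suitable power of $\eta$, each becomes a weakly holomorphic half-integral weight form on a congruence subgroup such as $\Gamma_0(576\ell^2)$ with a known character. The congruences on $r$ modulo $24$ in each part are precisely what is needed for the relevant target theta function ($\eta^\ell$ in (1), $\eta^3$ in (2), and the combination $\eta - \left(\frac{12}{\ell}\right)\eta^{\ell^2}$ in (3)) to have matching character and cuspidal behavior.

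Next, I would compute the filtration of these forms modulo $\ell$. The quantity $b(r,\ell)$ in the statement is designed to upper bound the mod-$\ell$ filtration of the $U_\ell$-image of $\eta^{-r}$, in line with the eta-quotient filtration arguments of Ahlgren--Boylan and Boylan. Under the hypothesis of (1), $b(r,\ell)=\ell/2$ pins the filtration at $\ell/2$. In (2) and (3), the identities $\ell^2 = r+4$ (resp.\ $\ell^2 = r+2$), together with $f_{r,\ell,0}\equiv 0\pmod{\ell}$ (resp.\ $f_{r,\ell,0}\equiv \beta\eta^\ell\pmod{\ell}$), permit a weight descent: shifting by a power of the discriminant and applying a $(\ell-1)$-weight reduction drops the weight of the relevant form from $r/2$ down to $3/2$ in (2) and to $1/2$ in (3). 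This is the step that uses the exact arithmetic of $\ell^2 - r$ most sharply.

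Finally, I would finish by classifying the half-integral-weight modular forms modulo $\ell$ in these small weights on the appropriate level. Weight $\ell/2$ holomorphic forms with the right character, supported on $q$-exponents that are squares modulo $24$, and vanishing past $q^\ell$ (guaranteed by $\ord_\ell(f_{r,\ell,0}) > \ell$) reduce to $\alpha\,\eta(\ell z)\equiv \alpha\eta^\ell\pmod{\ell}$. In weight $3/2$, a similar classification plus $\ord_\ell(f_{r,\ell,-1})>0$ gives $\alpha\eta^3$. In weight $1/2$, the mod-$\ell$ Serre--Stark basis theorem yields a combination of the unary thetas $\eta(z)$ and $\eta(\ell^2 z)\equiv \eta^{\ell^2}\pmod\ell$; the constraint that $f_{r,\ell,-1}$ is supported on exponents coprime to $\ell$ then forces the specific combination $\eta - \left(\frac{12}{\ell}\right)\eta^{\ell^2}$, because the $q^{\ell^2 m^2/24}$-coefficient of $\eta$ equals $\left(\frac{12}{\ell}\right)$ times that of $\eta^{\ell^2}$. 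The hardest part will be executing the weight descent in (2) and (3) cleanly and verifying the mod-$\ell$ Serre--Stark classification on the specific congruence subgroup at hand.
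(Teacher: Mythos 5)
Your plan for case (1) has the right shape and is essentially the paper's: the hypothesis $b(r,\ell)=\ell/2$ is exactly Serre's $U_\ell$-filtration bound applied to $\Delta^{r(\ell^2-1)/24}|U_\ell$ divided by $\eta^{r\ell}$, and the high order of vanishing then pins the form down. The paper, however, stays at level one with the multiplier $\nu_\eta^{-r\ell}$: it multiplies by $\eta^{24-\ell_0}$ to land in $S_{12+(\ell-\ell_0)/2}$ and reads off from the Miller basis that the only cusp form there with sufficiently high vanishing order is $\Delta^{1+(\ell-\ell_0)/24}=\eta^{\ell}\cdot\eta^{24-\ell_0}$. Your corresponding step (``weight $\ell/2$ forms with the right character \dots reduce to $\alpha\,\eta(\ell z)$'') is asserted rather than proved, and on $\Gamma_0(576\ell^2)$ it is not a quotable fact; you would still need to supply this valence/Miller-basis count, and the ``supported on square exponents'' property you invoke is part of the conclusion, not a hypothesis.

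The genuine gap is in cases (2) and (3). You propose a ``weight descent'' for $f_{r,\ell,-1}$ driven by $\ell^2=r+4$ or $\ell^2=r+2$, but you never identify a mechanism, and no descent applies to $f_{r,\ell,-1}$ alone. The paper's key input (Lemma \ref{thm:genfcn-1}) is the identity
\begin{equation*}
(f_{r,\ell,0})^{\ell}+2f_{r,\ell,-1}\equiv \eta^{-r\ell^2}\left(E_{\ell-1}^{\frac{\ell+1}{2}}\Delta^{\frac{r(\ell^2-1)}{24}}-\left(\tfrac{-r}{\ell}\right)\Theta^{\frac{\ell-1}{2}}\Delta^{\frac{r(\ell^2-1)}{24}}\right)\pmod{\ell},
\end{equation*}
obtained by sieving the coefficients of $\Delta^{r(\ell^2-1)/24}$ with the Legendre symbol; the hypotheses $\ord_{\ell}(f_{r,\ell,0})>0$ (or $f_{r,\ell,0}\equiv 0$) and $\ord_{\ell}(f_{r,\ell,-1})>0$ force the bracketed form to vanish mod $\ell$ to order greater than $r\ell^2/24$, whence it is $\eta^{r\ell^2}$ times a cusp form of weight $\frac{(r+1)(\ell^2-1)}{2}-\frac{r\ell^2}{2}=\frac{\ell^2-r-1}{2}$. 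This is exactly where $\ell^2=r+4$ yields weight $3/2$ and $\ell^2=r+2$ yields weight $1/2$, and it is where all the remaining hypotheses are consumed. Only the combination $(f_{r,\ell,0})^{\ell}+2f_{r,\ell,-1}$ lands in low weight: in case (3), $f_{r,\ell,-1}$ itself is congruent to $\alpha\eta-\alpha\left(\tfrac{12}{\ell}\right)\eta^{\ell^2}$, whose mod-$\ell$ filtration at level one is far above $1/2$, so your plan to classify $f_{r,\ell,-1}$ directly via a mod-$\ell$ Serre--Stark theorem cannot be carried out as stated. The hypothesis $f_{r,\ell,0}\equiv\beta\eta^{\ell}$ enters precisely to rewrite $(f_{r,\ell,0})^{\ell}\equiv\beta\eta^{\ell^2}$ after the combination has been identified with $\alpha\eta$; your closing observation that the support of $f_{r,\ell,-1}$ on exponents prime to $\ell$ forces $\beta\equiv\left(\tfrac{12}{\ell}\right)\alpha$ is correct and coincides with the paper's final step.
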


%
%

\begin{example} 
Theorem \ref{thm:abnormal} implies that $f_{23,7,0} \equiv  \alpha \eta^7 \pmod{7}$ for some integer $\alpha$. Comparing the leading coefficients, we see that we can take $\alpha = p_{23}(3) \equiv 3 \pmod{7}$. 

As an example of the second case, we have $f_{21,5,-1} \equiv  \eta^3 \pmod{\ell}$.

The third case of Theorem \ref{thm:abnormal} implies that $f_{23,5,-1} \equiv  \eta^{5^2} + \eta \pmod{5}$. 

More examples are given in Table \ref{Tab:abnormalexamples}.
\end{example}

\begin{center}
\captionof{table}{Examples of each type in Theorem \ref{thm:abnormal}. \label{Tab:abnormalexamples}}
 \begin{tabular}{||c || c || c || } 
 \hline
Case &  Function & Nonzero Examples $(r,\ell)$ \\
 \hline
1   &   $\eta^{\ell}$  &   (23,5), (23,7), (47,7), (47,13), (71,13), \\
    &                        &(71,19), (95,13), (95,17), (119,11), (119,13)  \\
 \hline
2  &$\eta^3$ & (21,5), (45,7) \\
\hline

3  & $\left( \frac{12}{\ell} \right) \eta^{\ell^2} - \eta$ & (23,5).\\
  \hline
  \end{tabular}
  \end{center}

 Numerical evidence suggests that in fact (\ref{eq:squarefree}) occurs \emph{only} when $f_{r,\ell,\delta}$ is congruent modulo $\ell$ to a theta function. 
\begin{conjecture}\label{thm:numericalevidence}
Let $r>0$ be odd and $\ell \ge 5$ be a prime not dividing $r$.
We have a theta-type congruence for some integer $t$ with $\left( \frac{r (r-24t)}{\ell} \right) = \delta \in \{0,-1\}$ if and only if 
$$f_{r,\ell,\delta} \equiv \sum_{n = 0}^{\infty} a(n) q^{bn^2/24} \pmod{\ell} $$ 
for some integers $b$ and $a(n)$.
\end{conjecture}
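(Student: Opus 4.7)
Suppose first that $f_{r,\ell,\delta} \equiv \sum_{n \ge 0} a(n) q^{bn^2/24} \pmod{\ell}$. Then the $q^{N/24}$-coefficient of $f_{r,\ell,\delta}$ vanishes modulo $\ell$ whenever $N$ is not of the form $bn^2$. For any prime $m \nmid b\ell$ there exists a residue class $M$ mod $m$ for which $bn^2 \not\equiv M \pmod{m}$ (namely any $M$ with $\left( \frac{Mb}{m} \right) = -1$); every Fourier coefficient of $f_{r,\ell,\delta}$ in this arithmetic progression then vanishes modulo $\ell$. Reading this back through the defining series yields the desired theta-type congruences $p_r(\ell m n + t) \equiv 0 \pmod{\ell}$; the Legendre-symbol condition $\left( \frac{r(r-24t)}{\ell} \right) = \delta$ is automatic because $f_{r,\ell,\delta}$ is built to isolate precisely that component.

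\textbf{Converse direction.} For this direction I would first interpret $f_{r,\ell,\delta}$ as a mod-$\ell$ half-integral weight modular form on a congruence subgroup of level dividing $576\ell$ with a suitable quadratic character, obtained by multiplying by an appropriate power of $\eta$ to clear denominators and then applying the sieving operator that cuts out the Legendre-symbol condition defining $\delta$. The theta-type hypothesis then asserts that an entire arithmetic progression of Fourier coefficients of $f_{r,\ell,\delta}$ vanishes modulo $\ell$. A standard calculation with the half-integral weight Hecke operator $T_{m^2}$ should convert this coefficient-vanishing into an eigenvalue relation $T_{m^2} f_{r,\ell,\delta} \equiv \lambda_m f_{r,\ell,\delta} \pmod{\ell}$, with $\lambda_m$ of exactly the shape that characterizes a theta series (equivalently, the mod-$\ell$ Shimura lift of $f_{r,\ell,\delta}$ behaves like a CM form at $m$). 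A Deligne--Serre-type descent, combined with filtration bounds on the ambient mod-$\ell$ half-integral weight space, would then force $f_{r,\ell,\delta}$ itself to be congruent to a theta series.

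\textbf{Main obstacle.} The hard part will be the last step: passing from a Hecke relation at a single prime $m$ to a global CM conclusion on the mod-$\ell$ Galois representation of the Shimura lift. The classical lacunarity theorems of Serre and Vign\'eras produce theta decompositions only when the relevant Hecke eigenvalues vanish on a density-one set of primes, whereas a single theta-type congruence supplies information at just one prime. Bridging this gap would require either an improved lacunarity criterion exploiting the special structure of $\eta^r$, or a direct classification of the small mod-$\ell$ Hecke module containing $f_{r,\ell,\delta}$; both appear out of reach at present, which is why Conjecture \ref{thm:numericalevidence} is currently supported only by numerical evidence.
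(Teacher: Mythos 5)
This statement is Conjecture \ref{thm:numericalevidence}; the paper does not prove it, and offers only the numerical evidence of Theorem \ref{thm:scarcity} (together with the consistency of Theorems \ref{thm:main} and \ref{thm:abnormal}) in its support. So there is no proof in the paper to compare yours against, and your proposal is, by your own admission, not a proof either. That said, your assessment of the situation is accurate. Your ``easy direction'' (theta series $\Rightarrow$ congruence) is essentially correct and is exactly the computation the paper carries out in deducing Theorem \ref{thm:main} from Theorem \ref{thm:etafamily}: if the nonzero coefficients of $f_{r,\ell,\delta}$ are supported on exponents $N=bn^2$, then choosing $t$ with $\left(\frac{(24t-r)\,b\,\ell^{1-\delta^2}}{m}\right)=-1$ (and with the prescribed symbol at $\ell$, which is compatible by CRT since $m\neq\ell$) forces every coefficient in the progression $t\pmod{\ell m}$ to vanish mod $\ell$. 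The only points worth tightening there are that you should also exclude the finitely many primes $m$ dividing $6b$ and handle the $n\equiv 0\pmod m$ terms, both of which you have implicitly done.

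For the converse (congruence $\Rightarrow$ theta series), you correctly identify the genuine obstruction: a single theta-type congruence gives coefficient vanishing tied to one prime $m$, whereas the known lacunarity/CM classification results (Serre, Vign\'eras) require vanishing at a positive-density set of primes, and no mechanism is currently known to bootstrap from one prime to density one. This is precisely why the statement is a conjecture rather than a theorem, and the paper's Theorems \ref{thm:0class} and \ref{thm:scarcity} should be read as partial evidence in this direction (ruling out congruences for many triples $(r,\ell,m)$ where $f_{r,\ell,\delta}$ is not a theta series) rather than as steps toward a proof. In short: your proposal does not close the gap, but it diagnoses it correctly, and no proof exists in the paper for you to have missed.
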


Our conjecture is supported by the numerical evidence summarized in Theorem \ref{thm:scarcity}. 
Modifying an algorithm in \cite{ABR}, we show that there are no nontrivial theta-type congruences for thousands of triples $(r, \ell, m)$. 
\begin{thm}
\label{thm:scarcity}
For odd $r$ such that $1 \le r < 24$, there are no nontrivial theta-type congruences with $\ell$ and $m$ in the range $[5, 6133]$ such that $\ell \nmid r$, except for those in Table \ref{Tab:apparentcongs}. For the triples in Table \ref{Tab:apparentcongs}, the generating function $f_{r,\ell,\delta}$ is congruent to a nonzero multiple of the theta function given in the corresponding row.
\end{thm}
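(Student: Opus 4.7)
The plan is to prove Theorem \ref{thm:scarcity} by computer-assisted exhaustion, building on the algorithm of \cite{ABR} and extending it to arbitrary odd $r$ with $1 \le r < 24$. The verification has two parts: for each triple $(r,\ell,m)$ in the search range that is not listed in Table \ref{Tab:apparentcongs}, and for each admissible residue $t$, exhibit an $n$ with $p_r(\ell m n + t) \not\equiv 0 \pmod{\ell}$; and for each listed triple, verify the stated theta-function congruence.

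First I would translate each candidate theta-type congruence $p_r(\ell m n + t) \equiv 0 \pmod{\ell}$ into the vanishing, modulo $\ell$, of the Fourier coefficients of $f_{r,\ell,\delta}$ supported on the arithmetic progression prescribed by $t$. After multiplication by a suitable power of $\eta$ (and, if needed, the substitution $q \mapsto q^{24}$ to clear fractional exponents), $f_{r,\ell,\delta}$ becomes a weakly holomorphic modular form on a congruence subgroup of explicit level $N$ and weight $k$ depending on $r$ and $\ell$. Sturm's theorem then reduces any such vanishing statement modulo $\ell$ to checking a bounded number of Fourier coefficients, namely at most $\tfrac{k}{12}[\SL_2(\Z):\Gamma_0(N)]$ of them. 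For each triple outside the table and each admissible $t$, one computes those coefficients modulo $\ell$ and searches for a nonvanishing witness; finding such a witness rules out the congruence.

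For the finitely many exceptional triples in Table \ref{Tab:apparentcongs}, match $f_{r,\ell,\delta}$ against the claimed theta function $\sum a(n) q^{bn^2/24}$, once again invoking Sturm's bound on the ambient space; the leading Fourier coefficient pins down the constant of proportionality, which is the ``nonzero multiple'' in the theorem statement. A few of these congruences are already consequences of Theorems \ref{thm:etafamily} and \ref{thm:abnormal}, so only the residual cases require fresh computation.

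The main obstacle is computational efficiency. The range $5 \le \ell, m \le 6133$ contains on the order of $800^2$ candidate triples, and the Sturm bound scales roughly like $\ell^2 m$, so a naive approach would require recomputing enormous truncated $q$-expansions of $\eta^r$ once per triple. To keep this feasible, precompute the reduction $\eta^r \pmod{\ell}$ up to the largest needed precision by iterating the pentagonal number identity $\prod_n (1-q^n) = \sum_n (-1)^n q^{n(3n-1)/2}$, then reuse this series across triples, varying only the sieving operator that extracts the relevant arithmetic progression. Careful bookkeeping of the character and fractional exponents, together with exploitation of the modest values $r < 24$, should bring the full computation into practical range, and the resulting Mathematica implementation verifies both halves of the theorem.
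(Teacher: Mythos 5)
Your overall shape---a finite, computer-assisted verification, with the positive direction (the entries of Table \ref{Tab:apparentcongs}) checked against explicit theta functions---matches the paper's intent, but the core of your negative direction would not work at the stated scale, and the ideas that make the computation feasible are missing. You propose to handle each triple $(r,\ell,m)$ by realizing the relevant sieved piece of $f_{r,\ell,\delta}$ as a form of weight roughly $r\ell^2/2$ on level roughly $m^2$ and computing $q$-expansions out to the Sturm bound. For $\ell,m$ near $6133$ that bound is on the order of $10^{13}$ or more coefficients \emph{per triple}, with several million triples; no precomputation of $\eta^r \bmod \ell$ rescues this, and your claim that it "should bring the full computation into practical range" is not substantiated. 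Moreover, Sturm's bound is only needed to \emph{prove} a congruence; to refute one you need a single nonvanishing witness, and the problem is that your scheme requires a witness for each of the roughly $\ell m/4$ admissible residues $t$, each witness being a value $p_r(N)$ with $N$ potentially as large as $\ell m \approx 3.7\times 10^7$.

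The missing ingredients are exactly the two structural results the paper leans on. First, the square-class structure of congruences (Proposition \ref{thm:squareclasses}): if $p_r(\ell m n + t)\equiv 0 \pmod{\ell}$ for all $n$, the same holds for every $t'$ with $r-24t'$ in the same square class as $r-24t$ modulo $\ell m$, and in particular $p_r(t')\equiv 0\pmod{\ell}$ for every such nonnegative $t'$. Since the square classes modulo $\ell m$ are indexed by the pair of symbols at $\ell$ and at $m$, one only needs, for each prime $m$ and each sign $\epsilon=\pm 1$, a single \emph{small} $t_\epsilon$ with $\left(\frac{r(r-24t_\epsilon)}{\ell}\right)=\delta$, $\left(\frac{r(r-24t_\epsilon)}{m}\right)=\epsilon$, and $p_r(t_\epsilon)\not\equiv 0\pmod{\ell}$; this is found among $10^5$ precomputed values of $p_r$. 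Second, Theorem \ref{thm:0class} disposes of the residues with $m \mid r(r-24t)$, which the square-class argument does not reach. With these two facts the entire negative direction reduces to Legendre-symbol bookkeeping against one precomputed table of partition values per $r$, which is why the range $[5,6133]$ is attainable. As written, your proof cannot be completed; incorporating Proposition \ref{thm:squareclasses} and Theorem \ref{thm:0class} is not an optimization but the substance of the argument.
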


\begin{center}
\captionof{table}{The pairs $(\ell,\delta)$ for each odd $1 < r < 24$ for which nontrivial theta-type congruences exist. \label{Tab:apparentcongs}}
 \begin{tabular}{||c || c ||} 
 \hline
 Function & $(r,\ell,\delta)$  \\
 \hline
 \hline
 $\eta$ & $(17,7,0), (19,5,0) $ \\
 \hline
 $\eta^3$ & $(3,7,0), (9,13,0), (9,5,0), (15,19,0), (21,5,-1)$ \\
 \hline
 $\eta^{\ell}$ & $(23,5,0), (23,7,0)$ \\
 \hline
$\left( \frac{12}{\ell} \right) \eta^{\ell^2} - \eta$ & $(23,5,-1),(23,7,-1)$ \\
\hline
\end{tabular}
\end{center}


It would be interesting to know more about the arithmetic properties of triples $(r, \ell, m)$ appearing in theta-type congruences. Generalizing a result in \cite{ABR}, we show that the prime factors of squarefree $m$ appearing in nontrivial instances of (\ref{eq:squarefree}) are relatively prime to $r (r - 24 t)$:
\begin{thm}\label{thm:0class}
Let $\ell \ge 5$ be a prime, let $r$ be an odd integer relatively prime to $\ell$, and let $m = m' m''$ be a square-free integer relatively prime to $6\ell$, where $m'' | r (r -24t)$ and $(m', r (r -24t))= 1$. 
 
We have the congruence
\begin{equation}\label{eq:cong8}
p_r(\ell m  n +t) \equiv 0 \pmod{\ell},
\end{equation}
for all $n \in \mathbb{Z}$ if and only if we have the congruence
$$
p_r(  \ell m'n + t) \equiv 0 \pmod{\ell}
$$
for all $n \in \mathbb{Z}$.
\end{thm}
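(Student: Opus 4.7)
The backward implication is immediate: if the congruence holds for $m'$, substituting $n \mapsto m'' n$ gives the congruence for $m$. For the forward implication, we induct on the number of prime factors of $m''$, reducing to the case $m'' = q$ a single prime with $q \mid r(r - 24 t)$ and $q \nmid 6 \ell m'$. Set $h(z) := \sum_{n \ge 0} p_r(\ell m' n + t) q^n$; the hypothesis $p_r(\ell m q n + t) \equiv 0 \pmod{\ell}$ for all $n$ is equivalent to $U_q h \equiv 0 \pmod{\ell}$, and we must show $h \equiv 0 \pmod{\ell}$.

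The strategy, adapting the method of \cite{ABR} from $r = 1$ to arbitrary $r$, is to realize $h$ (up to a shift) as the Fourier expansion of a mod-$\ell$ modular form. Since $p_r(n)$ is the $(n - r/24)$-coefficient of $\eta^{-r}$, the sliced function $\tilde h(z) := q^{(24t - r)/24}\, h(\ell m' z)$ extracts the coefficients of $\eta^{-r}$ at the indices $N = 24 \ell m' n + (24t - r)$ in its $q^{N/24}$ expansion. Using $\eta^{\ell} \equiv \eta(\ell z) \pmod{\ell}$ and multiplying by a suitable power of $\eta$, we promote $\tilde h$ to a holomorphic modular form $H$ modulo $\ell$ on a congruence subgroup of level dividing $576 \ell q$ with a quadratic character, living in a finite-dimensional mod-$\ell$ space. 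The key claim is that $H(z) \equiv G(qz) \pmod{\ell}$ for some mod-$\ell$ modular form $G$ of smaller level---equivalently, the mod-$\ell$ Fourier expansion of $H$ is supported on indices $N$ with $q \mid N$.

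Granting this claim, the proof concludes as follows. In the sub-case $q \mid r - 24t$, the identity $N \equiv 24 \ell m' n \pmod{q}$ (using $q \nmid 6 \ell m'$) gives $q \mid N$ if and only if $q \mid n$, so the support condition forces $p_r(\ell m' n + t) \equiv 0 \pmod{\ell}$ for every $n$ with $q \nmid n$. Combining with the hypothesis $U_q h \equiv 0$ (which handles the cases $q \mid n$), we conclude $h \equiv 0 \pmod{\ell}$. The sub-case $q \mid r$ is handled analogously, after accounting for the character twist arising from $q \mid r$ in $\eta^{-r}$.

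\textbf{Main obstacle.} The principal difficulty is establishing the Fourier support claim for $H$. We expect to derive it by manipulating the $\eta$-product expansion of $\eta^{-r}$ together with the theta identities (\ref{eq:etaidentity})--(\ref{eq:eta3identity}), using the hypothesis $q \mid r(r - 24 t)$ to force cancellations at the ``new at $q$'' component of $H$ modulo $\ell$. A case analysis ($q \mid r$ vs.\ $q \mid r - 24 t$) is needed, along with careful bookkeeping of the level, weight, and character through each inductive step, so that a Sturm-type bound certifies the vanishing.
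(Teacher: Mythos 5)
Your reduction of the forward direction to the statement ``$h|U_q \equiv 0 \pmod{\ell}$ implies $h \equiv 0 \pmod{\ell}$'' (with $m''=q$ a single prime) is the right target, and the backward direction is indeed immediate. But the ``key claim'' on which everything rests --- that the mod-$\ell$ Fourier expansion of $H$ is supported on indices $N$ with $q \mid N$ --- is not a usable intermediate step, and your proposed route to it cannot work. As an unconditional statement it is false: take $r=1$, $\ell=13$, $t=5$, $m'=1$, $q=7$, so that $q \mid r-24t = -119$ and all your side conditions hold; then $q\mid N$ if and only if $q\mid n$, and the claim would force $p(13n+5)\equiv 0\pmod{13}$ for every $n$ with $7\nmid n$, contradicting $p(18)=385\equiv 8\pmod{13}$. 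So the claim must be deduced from the congruence hypothesis $h|U_q\equiv 0$. But that hypothesis says precisely that $H$ is supported on indices with $q\nmid N$ (in the sub-case $q\mid r-24t$), so ``$H$ is supported on $q\mid N$'' is then literally equivalent to $H\equiv 0$, i.e.\ to the conclusion you are trying to prove. Your sketch of how to establish the claim (manipulating the $\eta$-product expansion and the theta identities, ``using the hypothesis $q\mid r(r-24t)$'') never invokes $h|U_q\equiv 0$ at all, and hence can only prove the false unconditional version. Moreover, even granting the claim, the sub-case $q\mid r$ does not close: there $q\mid N$ is equivalent to $n\equiv -t(\ell m')^{-1}\pmod q$, so the support condition and the hypothesis $h|U_q\equiv 0$ together control only the classes $n\not\equiv -t(\ell m')^{-1}$ and $n\equiv 0 \pmod q$, which do not exhaust all residues when $q\nmid t$.

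The missing ingredient is a genuine injectivity statement for $U_{m''}$ modulo $\ell$ on the relevant space of forms. The paper proceeds quite differently: it uses the square-class structure of congruences (Proposition \ref{thm:squareclasses}) together with the operators $Y_i = 1 + \epsilon_i(\,\cdot\,\otimes\chi_{Q_i}) - U_{Q_i}V_{Q_i}$ attached to the primes $Q_i\mid m'$ to package the congruence \eqref{eq:cong8} as the single statement $g|U_{m''}\equiv 0\pmod{\ell}$ for a weakly holomorphic form $g\in M^!_{k_{r,\ell,\delta}}(m'^2,\nu_\eta^{-r})$ whose level is prime to $m''$, and then invokes Lemma 3.3 of \cite{ABR}, which asserts that for such forms with odd $\eta$-power multiplier one has $g|U_{m''}\equiv 0\pmod{\ell}$ if and only if $g\equiv 0\pmod{\ell}$. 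That lemma is the actual content of the theorem; nothing in your proposal supplies it or a substitute for it, so the argument has a gap exactly where the difficulty lies.
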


In Section 2, we present the background from the theory of modular forms, and prove a few lemmas expressing $f_{r,\ell,\delta}$ in terms of modular forms. Section 3 proves Theorem \ref{thm:0class}, Section 4 proves Theorem \ref{thm:scarcity},  and Section 5 proves Theorems \ref{thm:etafamily}, \ref{thm:main}, and \ref{thm:abnormal}. Section 6 provides some examples and questions for future consideration.

\section*{Acknowledgements}
The authors would like to thank Scott Ahlgren for some very helpful comments on an earlier version of this manuscript. This work was supported by the Illinois Geometry Lab. IGL research is supported by the Department of Mathematics at the University of Illinois at Urbana-Champaign. This material is based upon work supported by the National Science Foundation under Grant No. DMS-1449269. Any opinions, findings, and conclusions or recommendations expressed in this material are those of the authors and do not necessarily reflect the views of the National Science Foundation.

\section{Preliminaries}
In this section, we present the necessary background from the theory of modular forms.
\subsection{Modular forms}
Throughout, $\mathbb{H} := \{ z \in \mathbb{C}: \operatorname{Im}(z) > 0 \}$ denotes the upper half plane and $z$ represents an element of $\mathbb{H}$. We let $e(x) = e^{2 \pi ix}$ and $q^n:=e(n z)$ for $n \in \mathbb{Q}$. 

We will use the action of $ \GL_2^+(\Q)$ on $\mathbb{H}$ given by fractional linear transformation:
$$
\pMatrix abcd   \cdot z := \frac{az+b}{cz+d}.
$$

If $f : \mathbb{H} \to \mathbb{C}$ is a function on the upper half-plane,  $k\in \frac12 \Z$,  and $\gamma=\pmatrix abcd\in \GL_2^+(\Q)$, 
we define
\begin{gather*}
\(f\sk\gamma\)(\tau):=(\det\gamma)^\frac k2(c\tau+d)^{-k}f(\gamma \tau),
\end{gather*}
where we always choose the principal branch of the square root.

The Dedekind eta function defined in Section 1 satisfies the transformation law
$$
\eta |_{1/2} \gamma = \nu_{\eta} (\gamma) \eta
$$
for every $\gamma \in \SL_2(\mathbb{Z})$, with $\nu_{\eta}$ given explicitly in \cite[\S 4.1]{knopp}.

We will require the congruence subgroups
$$\Gamma_0(N) := \Big\{ \pMatrix{a}{b}{c}{d} \in \SL_2(\mathbb{Z}) : c \equiv 0 \pmod{N} \Big\}.$$

Let $\chi$ be a Dirichlet character modulo $N$. We say that a holomorphic function $f: \mathbb{H} \to \mathbb{C}$ is a \emph{modular form of weight $k \in \frac{1}{2} \mathbb{Z}$ and level $N$ with respect to the multiplier system $\chi \nu_{\eta}^r$} if 
\begin{equation}\label{eq:functionalequation}
f\sk \gamma=\chi(d) \nu_{\eta}^r(\gamma) f\ \ \ \text{for all}\ \  \  \gamma=\pmatrix abcd\in \Gamma_0(N)
\end{equation}
and if $f \sk \sigma$ has a Fourier expansion of the form 
$$
f \sk \sigma = \sum_{n =0}^{\infty} a_{\sigma} (n) q^{n/24} \ \ \ \text{for all}\ \  \  \sigma \in \SL_2(\mathbb{Z})
$$ 
Such a function is a \emph{cusp form} if $\lim_{y \to \infty} f( \sigma (x+iy)) = 0$ for all $\sigma \in \SL_2(\mathbb{Z})$. On the other hand, a  \emph{weakly holomorphic modular form of weight $k \in \frac{1}{2} \mathbb{Z}$ and level $N$ with respect to $\chi \nu_{\eta}^r$} is a holomorphic function $f : \mathbb{H} \to \mathbb{C}$ which satisfies (\ref{eq:functionalequation}) and the property that for each $\sigma \in \SL_2(\mathbb{Z})$, there is a polynomial $P_{\sigma}$ such that $\lim_{y \to \infty} f(\sigma z) - P_{\sigma} (q^{\frac{-1}{24N}})  = 0$.  We denote by $M_k\(N, \chi \nu_{\eta}^r \)$, $S_k\(N,  \chi \nu_{\eta}^r \)$, and $M^!_k\(N,  \chi \nu_{\eta}^r \)$ the complex vector spaces of modular forms, cusp forms, and weakly holomorphic modular forms of weight $k$ and level $N$ with respect to $\chi \nu_{\eta}^r$.

 It is easy to check that any $f\in M_k^! \lp N, \chi \nu_{\eta}^r \rp$ has a Fourier expansion of the form
\begin{equation}
\label{eq:fourier-expansion}
  f
=
  \sum_{n \equiv  r\pmod{24}}
  a(n)q^\frac n{24}.
\end{equation}



%
%

\subsection{Level 1 modular forms}

When $N=1$, we let $M_k  \( \chi \nu_{\eta}^r \) := M_k \( 1, \chi \nu_{\eta}^r \)$, $S_k \( \chi \nu_{\eta}^r \) := S_k(1, \chi \nu_{\eta}^r)$, and $M_k^!  \( \chi \nu_{\eta}^r \) := M_k^! \( 1, \chi \nu_{\eta}^r \)$. When $N=1$ and $24|r$, these are the usual spaces of integer weight modular forms of level one. In this case, we denote $M_k := M_k \( 1, \chi \nu_{\eta}^r \)$, $S_k := S_k(1, \chi \nu_{\eta}^r)$. We recall that for even $k \ge 0$, the dimension of $M_k$ is given by
\begin{equation}\label{eq:dimensionformula}
\operatorname{dim} (M_k) = \begin{cases} 
\lfloor \frac{k}{12}+1 \rfloor & k \not\equiv 2 \pmod{12}\\
\lfloor \frac{k}{12} \rfloor & k \equiv 2 \pmod{12}. \\
\end{cases}
\end{equation}
When $M_k$ is nontrivial, we have $\operatorname{dim}(S_k) = \operatorname{dim}(M_k) -1$.

We recall that the Eisenstein series $E_k \in M_k$ are given by
$$
E_k(\tau) = 1 + \frac{(2\pi i)^{k}}{ (k-1)! \zeta(k)} \sum_{n=1}^{\infty} \sigma_{k-1}(n) q^n. 
$$ 
For primes $\ell \ge 5$, we have
$$
E_{\ell -1} \equiv 1 \pmod{\ell}.
$$

We will also refer to the modular discriminant
$$
\Delta(\tau) = \eta^{24} \in S_{12}.
$$

Finally, we will need to use the \emph{Miller basis} in a few of our proofs: 
\begin{lem}[Lemma 2.20 \cite{Stein}]\label{thm:miller}
There is a unique basis $\{ f_1, f_2, \cdots, f_d \}$ of $S_k$ such that for each each $f_i =: \sum_{n=1}^{\infty} a_i(n) q^n$, we have $a_i(n) \in \mathbb{Z}$ and $a_i(j) = \delta_{i,j}$ for $1 \le i,j \le d$.
\end{lem}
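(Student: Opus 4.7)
\noindent \emph{Proof proposal.} The plan is to first upgrade the statement to the assertion that the coefficient-truncation map
\[
\phi \colon S_k \longrightarrow \mathbb{C}^d,\qquad f = \sum_{n \ge 1} a(n) q^n \longmapsto \bigl(a(1), \ldots, a(d)\bigr),
\]
is an isomorphism of $\mathbb{C}$-vector spaces, and then to refine this to an integrality statement using an auxiliary triangular basis built out of $\Delta$, $E_4$, and $E_6$.

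For the first step, I would establish injectivity of $\phi$ via the valence formula for $\SL_2(\mathbb{Z})$. If $f \in \ker \phi$ is nonzero, then $v_\infty(f) \ge d+1$, so $f = \Delta^{d+1} g$ for some $g \in M_{k - 12(d+1)}$. A direct inspection of (\ref{eq:dimensionformula}) across the six residue classes of $k$ modulo $12$ shows $M_{k - 12(d+1)} = 0$, forcing $f = 0$. Surjectivity is then a dimension count, and uniqueness of the Miller basis is immediate, since any two candidate bases would differ by elements of $\ker \phi = 0$.

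For integrality, my plan is to produce an auxiliary basis of $S_k$ with integer Fourier coefficients that is upper triangular in the first $d$ coefficients. For each $1 \le i \le d$, choose integers $a_i, b_i \ge 0$ with $4 a_i + 6 b_i = k - 12 i$ — possible because the dimension formula guarantees that $k - 12i$ is a nonnegative even integer different from $2$ for every $i$ in this range — and set $g_i := \Delta^i E_4^{a_i} E_6^{b_i}$. Since $\Delta$, $E_4$, and $E_6$ all have integer $q$-expansions with leading coefficient $1$, each $g_i$ lies in $S_k$, has integer coefficients, and begins $g_i = q^i + O(q^{i+1})$. The matrix $M$ of first-$d$ coefficients of $(g_1, \ldots, g_d)$ is upper triangular with $1$s on the diagonal, so $M \in \GL_d(\mathbb{Z})$; the $g_i$ are linearly independent by leading-term considerations and thus form a basis of $S_k$. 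The Miller basis is then $f_i := \sum_j (M^{-1})_{ij}\, g_j$, and integrality of $M^{-1}$ forces $a_i(n) \in \mathbb{Z}$ for every $n$.

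The main obstacle is the bookkeeping for the exponents $(a_i, b_i)$: one must verify that $k - 12i \ne 2$ for every admissible $i$, which reduces to observing that $d = \lfloor k/12 \rfloor - 1$ precisely when $k \equiv 2 \pmod{12}$, so that $k - 12 d = 14$ and the problematic weight does not occur. Once that case check is in place, everything else is either a standard application of the valence formula or elementary integer linear algebra.
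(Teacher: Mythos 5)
Your proposal is correct, and both halves check out: the injectivity of the truncation map follows because $k-12(d+1)$ equals $2$ when $k\equiv 2\pmod{12}$ and is negative otherwise, so $M_{k-12(d+1)}=0$ in every case; and your case analysis confirming $k-12i$ is a nonnegative even integer $\neq 2$ for $1\le i\le d$ (so that $\Delta^i E_4^{a_i}E_6^{b_i}$ exists) is right. Note that the paper does not prove this lemma at all --- it is quoted verbatim as Lemma 2.20 of the cited reference \cite{Stein} --- and your argument is essentially the standard one given there (a triangular basis from $\Delta^i E_4^{a_i}E_6^{b_i}$ followed by inverting a unipotent integer matrix), so nothing further is needed.
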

The basis in Lemma \ref{thm:miller} is called the Miller basis of $S_k$.

\subsection{Linear operators}
We will make use of a few linear operators which act nicely on the spaces $M_k\left( N, \chi \nu_\eta^r \right)$ and $M_k^!\left( N, \chi \nu_\eta^r \right)$, specifically, the $U_m$ and $V_m$ operators, and twisting by a quadratic character. We define these operators here and state their properties, which can be found in Section 2 of \cite{ABR}. Some of these properties are stated in \cite{ABR} with the assumption $(r,24)=1$, but their proofs only depend on $r$ being odd. All can be verified by a straightforward calculation (see pages 128-133 of \cite{koblitz}). Throughout $r$ is a positive odd integer.

Suppose that $f \in M_k^! (N,\chi\nu_{\eta}^r )$ has Fourier expansion~\eqref{eq:fourier-expansion},  and that $m$ is a positive integer. 
Define 
\begin{equation}\label{eq:umdef}
  f\big|U_m:=\sum a(mn)q^\frac n{24}
\quad\text{and}\quad
  f\big|V_m:=\sum a(n)q^\frac {mn}{24}.
\end{equation}

  If~$M$ is an odd positive squarefree integer,  let $\chi_M = \big(\frac{\bullet}{M}\big)$ denote the quadratic character of modulus~$M$. Equations (2.8)-(2.9) of \cite{ABR} state that if  $Q \ge 5$ is prime, then
\begin{align}
\label{eq:uqaction}
  U_Q :& M_k^!\lp N, \chi \nu_\eta^r\rp \longrightarrow M_k^! \lp N \mfrac Q{(N, Q)},\, \chi\chi_Q^r \nu_\eta^{Qr}\rp,
\\
\label{eq:vqaction}
  V_Q :& M_k^!\lp N, \chi\nu_\eta^r\rp \longrightarrow M_k^! \lp NQ, \,\chi{\chi_Q}^r\nu_\eta^{Qr}\rp.
\end{align}
The $U_Q,V_Q$ operators act similarly on the spaces $M_k (N,\chi\nu_{\eta}^r )$ and $S_k (N,\chi\nu_{\eta}^r )$.

As in the equations in \cite{ABR} following (2.8)-(2.9), if $Q \ge 5$ is prime,  we define the twist of~$f \in M_k (N,\chi \nu_\eta^r)$ with Fourier expansion~\eqref{eq:fourier-expansion} by $\chi_Q$ as
\begin{equation}\label{eq:twistdef}
  f\otimes \chi_Q:=\sum \chi_Q (n) a(n)q^\frac n{24}
\tx{.}
\end{equation}
Then we have
\begin{equation}\label{eq:quadtwist}
  f\otimes\chi_Q \in M_k\left( NQ^2, \chi\nu_\eta^r\right).
\end{equation}

\subsection{Modular forms modulo $\ell$}
We rely on a few results from the theory of modular forms modulo primes. Throughout the subsection, $\ell \ge 5$ is prime and let $k \in \mathbb{Z}$.

We have the following consequence of a theorem of Swinnerton-Dyer (see \cite{SwinnertonDyer} or Proposition 2.43 of \cite{CBMS}):
\begin{lem}[\cite{SwinnertonDyer}]
If $f \in M_k$ and $g \in M_{k'}$ are such that $f \equiv g \pmod{\ell}$,  then $k \equiv k' \pmod{\ell -1}.$
\end{lem}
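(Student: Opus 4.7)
The plan is to deduce the lemma from the standard Swinnerton-Dyer structure theorem for the graded ring of mod-$\ell$ modular forms of level one, using the recalled fact that $E_{\ell-1}\equiv 1 \pmod{\ell}$. First I would dispose of the trivial case: if $f\equiv 0 \pmod{\ell}$, then $g\equiv 0 \pmod{\ell}$ too, and both sides of the congruence are $0$, so there is nothing to prove and the statement is vacuous for that pair $(k,k')$. So assume $\bar f,\bar g\in\mathbb{F}_\ell[\![q]\!]$ are both nonzero.

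Next, I would normalize the weights using $E_{\ell-1}\equiv 1\pmod{\ell}$. Since $g\cdot E_{\ell-1}^{j}\equiv g\pmod{\ell}$ and $g\cdot E_{\ell-1}^{j}\in M_{k'+j(\ell-1)}$ for every $j\ge 0$, the weight $k'$ may be shifted freely by positive multiples of $\ell-1$, and symmetrically for $k$. Reducing mod $\ell-1$, assume $k\ge k'$ and $0\le k-k'<\ell-1$; the goal is to show $k-k'=0$.

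The main input is Swinnerton-Dyer's theorem (see \cite{SwinnertonDyer}, or Proposition 2.43 of \cite{CBMS}): for primes $\ell\ge 5$, the kernel of the reduction map
\[
\rho:\bigoplus_{k}M_k\otimes\mathbb{F}_\ell\longrightarrow \mathbb{F}_\ell[\![q]\!]
\]
is the ideal generated by $E_{\ell-1}-1$. Viewing $f-g$ as an inhomogeneous element of the graded ring $\bigoplus_k M_k\otimes\mathbb{F}_\ell$ with homogeneous components in degrees $k$ and $k'$, the hypothesis $f\equiv g\pmod{\ell}$ says it lies in $\ker\rho$, so we may write $f-g=(E_{\ell-1}-1)\cdot h$ for some $h=\sum_j h_j$ with $h_j\in M_j\otimes\mathbb{F}_\ell$. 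Multiplication by $(E_{\ell-1}-1)$ sends the degree-$j$ piece into a sum of pieces of degrees $j+\ell-1$ and $j$, so the set of degrees in which $(E_{\ell-1}-1)h$ is nonzero is closed under adding or subtracting $\ell-1$ and connects the degrees that actually appear. Since the nonzero degrees of $f-g$ are precisely $\{k,k'\}$, this forces $k-k'\in(\ell-1)\mathbb{Z}$, which together with $0\le k-k'<\ell-1$ yields $k=k'$, completing the proof.

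The only real obstacle is Swinnerton-Dyer's structure theorem itself, which is a non-trivial classical input; once that is granted, the argument is a short weight-matching computation in the graded ring. I would simply cite the theorem as the paper already does and present the brief deduction above.
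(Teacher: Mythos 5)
The paper offers no proof of this lemma---it is quoted directly from Swinnerton-Dyer (via Proposition 2.43 of \cite{CBMS})---so there is nothing internal to compare against; what you have written is a genuine derivation from the structure theorem, and it is correct in substance. Two remarks. First, you are right that the case $f\equiv 0\pmod{\ell}$ must be excluded: the lemma as literally stated fails there (e.g.\ $\ell\Delta\equiv \ell E_4\pmod{7}$ with $12\not\equiv 4\pmod 6$), so the nonvanishing hypothesis is implicit, and flagging it is a point in your favor. Second, the one step that needs tightening is the assertion that the set of degrees in which $(E_{\ell-1}-1)h$ is nonzero is ``closed under adding or subtracting $\ell-1$'': this is false as stated. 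For instance, if $h=h_j+Ah_j$ with $h_j$ isobaric of weight $j$ and $A$ the weight-$(\ell-1)$ polynomial with $A(E_4,E_6)=E_{\ell-1}$, then $(A-1)h=A^2h_j-h_j$ is supported in degrees $j$ and $j+2(\ell-1)$ but not $j+\ell-1$. The correct (and easy) repair is to group the isobaric components of $h$ by residue class of their weight modulo $\ell-1$: multiplication by $A-1$ preserves these classes, so if $k\not\equiv k'\pmod{\ell-1}$ then projecting onto the class of $k$ gives $F=(A-1)h^{(k)}$ in $\mathbb{F}_\ell[X,Y]$, where $F$ is the isobaric polynomial with $f=F(E_4,E_6)$; evaluating at $(\widetilde{E_4},\widetilde{E_6})$ and using $\widetilde{E_{\ell-1}}=1$ forces $\widetilde f=0$, a contradiction. (Equivalently: $(A-1)h^{(c)}$, when nonzero, is supported in at least two distinct degrees---the lowest degree of $h^{(c)}$ and the highest plus $\ell-1$---so it can never equal the single homogeneous component $F$.) With that patch the argument is complete and is exactly the standard proof.
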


For $f \in M_k(\mathbb{F}_{\ell})$, Serre \cite{Serre} defined the \emph{filtration} $w(f)$ as  $$w(f) = \operatorname{inf} \{r \in \mathbb{Z}: f \in M_{r} ( \mathbb{F}_{\ell}) \}.$$

Let $\Theta := q \frac{d}{dq} $ be the Ramanujan Theta operator. We have the following results of Serre:

\begin{lem}[Serre \cite{Serre} Lemme 2]\label{thm:filtration}
For $\ell \ge 5$, we have 
$$w(f | U_{\ell}) \le \ell + \frac{w(f) -1 }{\ell},$$  
and
$$
w(f|\Theta) \le w(f) + \ell + 1.
$$
In the second inequality, there is equality if and only if $\ell \nmid w(f)$.
\end{lem}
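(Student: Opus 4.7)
The plan is to prove both inequalities within Serre's framework for modular forms modulo $\ell$, pivoting on the Ramanujan theta-operator identity and a mod-$\ell$ avatar for $E_2$. Throughout write $k = w(f)$.

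For the theta bound, start from Ramanujan's identity $12\Theta f = k E_2 f + g$ with $g \in M_{k+2}(\SL_2(\Z))$. The key input is that although $E_2$ is only quasi-modular, one has $E_{\ell+1} \equiv E_2 \pmod \ell$, a short consequence of Fermat's theorem ($\sigma_\ell(n) \equiv \sigma_1(n) \pmod \ell$) together with the Kummer congruence $B_{\ell+1}/(\ell+1) \equiv B_2/2 \pmod \ell$. Substituting and multiplying $g$ by $E_{\ell-1} \equiv 1 \pmod \ell$ yields
$$12\Theta f \equiv k\, E_{\ell+1}\, f + E_{\ell-1}\, g \pmod \ell,$$
with the right-hand side in $M_{k+\ell+1}(\SL_2(\Z))$, so $w(\Theta f) \le k + \ell + 1$. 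For the equality criterion, observe that when $\ell \mid k$ the leading term vanishes mod $\ell$ and one is left with $12\Theta f \equiv g \pmod \ell$, giving $w(\Theta f) \le k+2 < k + \ell + 1$. When $\ell \nmid k$ the leading term $k E_{\ell+1} f$ has filtration exactly $k+\ell+1$, while $E_{\ell-1} g$ has filtration at most $k+2$, so no cancellation is possible and equality holds.

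For the $U_\ell$ bound, the central identity is
$$(f | U_\ell)\, | V_\ell \equiv f - \Theta^{\ell-1} f \pmod{\ell},$$
which follows from Fermat's little theorem applied termwise: $\Theta^{\ell-1}$ preserves coefficients supported on $\ell \nmid n$ and annihilates those on $\ell \mid n$, so $f - \Theta^{\ell-1} f$ is supported precisely on $\ell \mathbb Z$, matching $(f|U_\ell)|V_\ell$. Iterating the theta bound gives $w(\Theta^{\ell-1} f) \le k + (\ell-1)(\ell+1) = k + \ell^2 - 1$, so the right side has filtration at most $k + \ell^2 - 1$. To transfer this to $f|U_\ell$, use that on level-one mod-$\ell$ forms the operator $V_\ell$ coincides with the Frobenius $h \mapsto h^\ell$ (since $\sum a_n q^n \mapsto \sum a_n q^{\ell n}$ and $a_n^\ell = a_n$ in $\mathbb{F}_\ell$), and Frobenius multiplies filtration by $\ell$. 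First observe $f|U_\ell \equiv f|T_\ell \pmod \ell$ (the $\ell^{k-1} f|V_\ell$ term in the Hecke decomposition vanishes mod $\ell$ for $k\ge 2$), which places $f|U_\ell$ inside the ring of level-one mod-$\ell$ forms; then the compatibility $w((f|U_\ell)|V_\ell) = \ell\, w(f|U_\ell)$ gives $\ell\, w(f|U_\ell) \le k + \ell^2 - 1$, i.e.\ $w(f|U_\ell) \le \ell + (k-1)/\ell$.

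The main obstacle is the Frobenius compatibility $w(h|V_\ell) = \ell\, w(h)$. The upper bound $w(h|V_\ell) \le \ell\, w(h)$ is immediate from $h^\ell \in M_{\ell w(h)}$, but the reverse inequality — filtration is preserved, not merely bounded, under Frobenius — rests on injectivity of Frobenius on the graded ring of mod-$\ell$ modular forms modulo the relation $E_{\ell-1} \equiv 1$. This structural fact about the mod-$\ell$ modular form algebra is the subtlest ingredient; once it is in hand, the remainder of both arguments is bookkeeping with the explicit identities above.
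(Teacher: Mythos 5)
The paper does not prove this lemma at all---it imports it verbatim as Serre's Lemme 2---and your argument is precisely the standard Serre/Swinnerton--Dyer proof: the quasi-modularity identity $12\Theta f = k E_2 f + g$ combined with $E_2 \equiv E_{\ell+1}$ and $E_{\ell-1}\equiv 1 \pmod{\ell}$ for the $\Theta$ bound, and the identity $(f|U_\ell)|V_\ell \equiv f - \Theta^{\ell-1}f \equiv (f|U_\ell)^{\ell} \pmod{\ell}$ for the $U_\ell$ bound. The argument is correct; the structural facts you defer to (multiplicativity of the filtration $w(gh)=w(g)+w(h)$, hence $w(h^\ell)=\ell\,w(h)$, and $w(E_{\ell+1})=\ell+1$) are exactly the ingredients Serre himself takes from the structure theorem for the algebra of mod-$\ell$ forms, so you have correctly identified where the real content lies.
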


\subsection{Previous results on congruences of colored partitions}
We requires a couple results from \cite{andersen}. The first says that congruences for $p_r(n)$ have a square class structure:
\begin{prop}[Lemma 5.3 of \cite{andersen}]\label{thm:squareclasses}
Suppose $p_r(mn+t) \equiv 0 \pmod{\ell}$ for all $n$, where $(m,24)=1$.  If $r- 24 t' \equiv (r -24 t) \cdot h^2 \pmod{m}$ and $(h,m)=1$, then $p_r(mn+t') \equiv 0 \pmod{\ell}$ for all $n$. 
\end{prop}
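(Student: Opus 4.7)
The plan is to recast the hypothesis as a vanishing statement for the Fourier coefficients of the weakly holomorphic modular form $F(z) := \eta(z)^{-r}$, and then to transfer this vanishing across square classes mod $m$ using half-integer weight Hecke operators together with the $U_Q$, $V_Q$, and twist operators from Section 2.

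Setup. Write $F(z) = \sum_N a(N) q^{N/24}$, where $a(N) = p_r\big(\tfrac{N+r}{24}\big)$ for $N \equiv -r \pmod{24}$ and $a(N) = 0$ otherwise, so that $F \in M_{-r/2}^!(1, \nu_{\eta}^{-r})$. The hypothesis $p_r(mn+t) \equiv 0 \pmod{\ell}$ for all $n$ becomes $a(N) \equiv 0 \pmod{\ell}$ for every $N \equiv N_0 := 24t - r \pmod{m}$ (within the built-in support condition $N \equiv -r \pmod{24}$). The goal is the analogous vanishing on the progression $N \equiv N_0 h^2 \pmod{m}$. By the Chinese Remainder Theorem, I would reduce to the case $m$ is a prime power.

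Main step. Using Dirichlet's theorem, fix a prime $Q \equiv h \pmod{m}$ with $(Q, 6m\ell) = 1$. The half-integer weight Hecke operator $T_{Q^2}$ (applied after multiplying $F$ by, say, $\eta^{24s}$ for $s$ large enough to land in a holomorphic positive-weight space) sends Fourier coefficients as
\[
a(N) \;\longmapsto\; a(NQ^2) \;+\; \epsilon\, Q^{\alpha}\, \tleg{N}{Q}\, a(N) \;+\; Q^{\beta}\, a(N/Q^2),
\]
for constants $\epsilon,\alpha,\beta$ determined by $r$ and the multiplier system. On the progression $N \equiv N_0 \pmod{m}$ with $(N,Q)=1$, the middle term vanishes mod $\ell$ by hypothesis and the last term vanishes because $Q^2 \nmid N$; hence $(F \mid T_{Q^2})(N) \equiv a(NQ^2) \pmod{\ell}$. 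Since $NQ^2 \equiv Nh^2 \pmod{m}$, controlling $F \mid T_{Q^2}$ on the original class $N_0 \pmod{m}$ is equivalent to controlling $F$ on the target class $N_0 h^2 \pmod{m}$.

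Main obstacle. The crux is showing that $F \mid T_{Q^2}$ itself is $\equiv 0 \pmod{\ell}$ on the progression $N \equiv N_0 \pmod{m}$, as this is what would let us conclude $a(M) \equiv 0 \pmod{\ell}$ for $M \equiv N_0 h^2 \pmod{m}$. I would attempt this either by averaging $T_{Q^2}$ over many primes $Q$ in the residue class $h \pmod{m}$ and using orthogonality of Dirichlet characters mod $m$ to isolate the projection onto the desired arithmetic progression, or by working directly with the quadratic twists $F \otimes \psi$ for characters $\psi$ mod $m$ (cf.\ \eqref{eq:quadtwist}) and showing that the twisted forms inherit the required vanishing from the hypothesis. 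A secondary obstacle is the negative weakly holomorphic weight of $F$, which sits outside the classical Shimura-Hecke setup; this is handled by the multiplication by $\eta^{24s}$ above, translating conclusions back to $F$ afterwards by dividing out. Finally, the edge case where $(N_0, m) > 1$ requires a separate small-modulus argument, but reduces to the same mechanism after quotienting by the common factor.
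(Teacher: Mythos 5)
The paper does not actually prove this proposition; it is imported verbatim from Andersen (Lemma 5.3 of the cited reference), where the argument goes through the transformation behavior of $\eta^{-r}$ under $\SL_2(\Z)$: one sieves $\eta^{-r}$ into components supported on the residue classes $N \pmod m$ and uses the explicit eta multiplier to show that the modular group permutes, up to roots of unity, the components whose indices lie in a single square class modulo $m$, so that vanishing modulo $\ell$ of one component forces vanishing of all of them. Your setup is correct --- recasting the hypothesis as $a(N)\equiv 0 \pmod \ell$ for $N \equiv 24t - r \pmod m$ in the expansion of $\eta^{-r}$, and the reduction to prime-power modulus --- but the Hecke-operator mechanism you build on it does not close.

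There are two concrete problems. First, the coefficient identity for $T_{Q^2}$ only ties $a(N)$ to $a(NQ^2)$ and $a(N/Q^2)$. Read as you propose, it gives information about $a(M)$ only for $M$ of the form $NQ^2$, i.e.\ for $M$ divisible by $Q^2$; these are a density-zero subset of the target progression $M \equiv N_0 h^2 \pmod m$, most of whose members are squarefree, so even a complete version of your main step would not yield the stated conclusion. (Reading the identity the other way, with $a(M)$ as the middle term and $Q \equiv h^{-1} \pmod m$, does reach every $M$ in the target class coprime to $Q$, but then you need the $M$-th coefficient of $F \mid T_{Q^2}$ to vanish mod $\ell$, which is the same unproven input.) Second, and more fundamentally, the step you yourself flag as the crux --- that $F \mid T_{Q^2}$ vanishes modulo $\ell$ on the progression $N \equiv N_0 \pmod m$ --- is never established and is not plausible without further input: $\eta^{-r}$ is not a Hecke eigenform, the hypothesis says nothing about how $F$ interacts with $T_{Q^2}$, and asserting this vanishing is essentially equivalent to the statement being proved. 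The proposed remedies (averaging over primes $Q \equiv h \pmod m$ with character orthogonality, or controlling the twists $F \otimes \psi$ for characters $\psi$ mod $m$) are not carried out, and vanishing on a single residue class mod $m$ is too weak to control the individual twists such an argument would need. The square-class structure genuinely comes from the $\SL_2(\Z)$-action on the eta multiplier, not from Hecke theory, so the proof needs to be reorganized around that transformation argument.
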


We also have the following restriction on congruences:
\begin{thm}[Theorem 1.4 of \cite{andersen}] 
Suppose $p_r(mn+t) \equiv 0 \pmod{\ell}$, and $\ell \nmid r$.  Then $\ell | m$ and
$$
 \lp \frac{r(r-24t)}{\ell} \rp \neq 1.  
$$
\end{thm}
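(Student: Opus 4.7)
The plan is to combine the mod-$\ell$ theory of the weakly holomorphic half-integer weight modular form $\eta^{-r}$ with the square-class reduction of Proposition~\ref{thm:squareclasses} and the initial value $p_r(0)=1$. The two conclusions, $\ell\mid m$ and $\bigl(\tfrac{r(r-24t)}{\ell}\bigr)\neq 1$, are established separately: the divisibility is forced by a filtration/sieve argument on $\eta^{-r}$ modulo~$\ell$, and the Legendre-symbol restriction then follows from square-class bookkeeping together with the boundary value.

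First I would set up the generating function. From $q^{r/24}\sum_{n\ge 0} p_r(n)q^n=\eta(z)^{-r}$, the hypothesis $p_r(mn+t)\equiv 0\pmod{\ell}$ is the statement that the Fourier coefficients of $\eta^{-r}$ vanish modulo~$\ell$ on an arithmetic progression of modulus~$m$ (after the shift by $r/24$). Multiplying by $\Delta^s=\eta^{24s}$ for sufficiently large $s$ promotes $\eta^{-r}$ to a holomorphic element of $M_{k}(\nu_\eta^{-r})$ for some positive half-integer weight $k$ depending on $r$ and~$s$, and the standard sieve formula exhibits the subseries picked out by this progression as a $\mathbb{C}$-linear combination of the translates $\eta(z+j/m)^{-r}$ for $0\le j<m$, and hence as a weakly holomorphic modular form on a congruence subgroup whose level has the same prime divisors as~$m$.

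To force $\ell\mid m$, I would assume $\ell\nmid m$ and derive a contradiction. Twisting by quadratic characters modulo the prime divisors of~$m$ (equation~\eqref{eq:quadtwist}) keeps one in a space whose level is coprime to~$\ell$, and after multiplying by enough copies of $\eta^{24}$ to convert the multiplier to an integer-weight setting modulo~$\ell$ and normalizing the weight by $E_{\ell-1}\equiv 1\pmod{\ell}$, iterated application of $U_\ell$ strictly decreases the Serre filtration by Lemma~\ref{thm:filtration}, eventually driving the sieved form to zero modulo~$\ell$. Since $\ell\nmid r$ ensures that $\eta^{-r}$ is not identically zero modulo~$\ell$, this contradicts the vanishing of a nontrivial coefficient progression. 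Granted $\ell\mid m$, suppose $\bigl(\tfrac{r(r-24t)}{\ell}\bigr)=1$: then $r$ and $r-24t$ lie in the same nonzero square class modulo~$\ell$, so by the Chinese Remainder Theorem combined with Proposition~\ref{thm:squareclasses} (applied after peeling off the $(m,24)$-part of~$m$ via an auxiliary sieve at the primes $2,3$), one finds some $t'$ in the progression $mn+t'$ with $p_r$-value equal to $p_r(0)=1$, contradicting the mod-$\ell$ vanishing. The main obstacle is the first step: carefully tracking the half-integer weight multiplier $\nu_\eta^{-r}$ under the sieve, twist, and $U_\ell$ operations, and ruling out degenerate situations where the Serre filtration fails to strictly decrease, which would otherwise let the sieved form persist modulo~$\ell$.
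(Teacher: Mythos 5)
First, note that the paper does not prove this statement at all: it is imported verbatim as Theorem 1.4 of \cite{andersen}, so there is no internal proof to compare against, and your argument must stand on its own. It does not. The central problem is in your treatment of $\ell\mid m$: the hypothesis $p_r(mn+t)\equiv 0\pmod{\ell}$ says precisely that the sieved form (the restriction of $\eta^{-r}$ to the progression) is congruent to $0$ modulo $\ell$, so twisting it and hitting it with $U_\ell$ simply keeps it at $0$ --- there is no contradiction to be had, only consistency. Moreover, iterating $U_\ell$ does not ``drive a form to zero'': Lemma \ref{thm:filtration} only pushes the filtration down to roughly $\ell+1$, a range in which plenty of nonzero forms live. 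What the theorem actually requires is a \emph{non-vanishing} statement --- that $p_r$ takes values prime to $\ell$ in every residue class modulo an $m$ prime to $\ell$, i.e.\ that the sieved form cannot be $0$ modulo $\ell$ --- and nothing in your filtration argument addresses this. For $r=1$ this is exactly the Ahlgren--Ono conjecture, settled by Radu with much heavier machinery at the primes $2$ and $3$; Andersen's proof for general $r$ follows that method. Your appeal to $\ell\nmid r$ is also misplaced: $\eta^{-r}$ has leading coefficient $1$ and is never identically zero modulo $\ell$, so that observation carries no force; the hypothesis $\ell\nmid r$ matters because when $\ell\mid r$ the series $\prod(1-q^n)^{-r}\equiv\prod(1-q^{\ell n})^{-r/\ell}\pmod{\ell}$ degenerates and the conclusion genuinely fails.

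The second half has a separate gap. Proposition \ref{thm:squareclasses} moves $t$ within a square class \emph{modulo $m$} (with $(m,24)=1$), not modulo $\ell$. Knowing $\left(\frac{r(r-24t)}{\ell}\right)=1$ only places $r-24t$ in the square class of $r$ modulo the single prime $\ell$; to land on $t'=0$ and invoke $p_r(0)=1$ you would need $r(r-24t)$ to be a square modulo every prime factor of $m$, which is not given, and the Chinese Remainder Theorem cannot manufacture that. Nor can you ``peel off'' the factors of $m$ at $2$ and $3$: a congruence $p_r(mn+t)\equiv 0$ for all $n$ does not imply the corresponding congruence for a proper divisor of $m$ (that is exactly the content of Theorem \ref{thm:0class}, which requires the divisor being removed to satisfy $m''\mid r(r-24t)$). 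Your square-class argument does go through in the special case $m=\ell$, but not in the generality claimed.
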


\subsection{Modular properties of $f_{r,\ell,\delta}$}
The generating functions $f_{r,\ell,\delta}$ have been used in several papers on partition congruences going back to the work of Ono \cite{Ono_annals} and Ahlgren-Ono \cite{Ahlgren-Ono} for $r=1$,  more recently in \cite{ABR}, as well as \cite{Dicks} for $r > 1$. From these previous works, it is easy to see that $f_{r,\ell,\delta}$ is always a weakly holomorphic modular form, but we include a statement and proof of this fact here for completeness. 
\begin{lem}\label{thm:genfcn0}
We assume $\ell \ge 5$ is prime, and that $r$ is an odd integer co-prime to $\ell$. Then 
$$
f_{r,\ell,0} \equiv  \frac{\Delta^{\frac{r(\ell^2 - 1)}{24}} | U_{\ell}}{\eta^{r\ell}} \pmod{\ell}.
$$
It follows that $f_{r,\ell,0}$ is congruent modulo $\ell$ to a weakly holomorphic modular form in $M_{k_{r,\ell,0}}^! (\nu_{\eta}^{-r\ell})$, where
$$
k_{r,\ell,0} = \frac{r(\ell^2-\ell-1)}{2}.
$$
\end{lem}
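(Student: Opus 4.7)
The plan is to establish the congruence by a direct $\eta$-product calculation, then recover the level-one modularity via a Hecke-operator trick.

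First, I would expand
$$\eta(z)^{-r}\;=\;q^{-r/24}\prod_{n\ge 1}(1-q^n)^{-r}\;=\;\sum_{m\equiv -r\,(24)}p_r\!\left(\frac{m+r}{24}\right)q^{m/24}$$
using the standard generating function for $p_r(n)$. Comparing with the definition of $f_{r,\ell,0}$ and the $U_\ell$-action on $q^{1/24}$-series from~\eqref{eq:umdef} gives, as a genuine identity (no reduction mod $\ell$ required),
$$f_{r,\ell,0}\;=\;\eta^{-r}\big|U_\ell.$$

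Next I would invoke the classical congruence $\eta(z)^\ell\equiv\eta(\ell z)\pmod{\ell}$, which is an immediate consequence of $(1-q^n)^\ell\equiv 1-q^{\ell n}\pmod\ell$. Iterating yields $\eta(z)^{r\ell^2}\equiv\eta(\ell^2z)^r\pmod\ell$, whence
$$\eta^{-r}(z)\;=\;\frac{\eta^{r(\ell^2-1)}(z)}{\eta^{r\ell^2}(z)}\;\equiv\;\frac{\eta^{r(\ell^2-1)}(z)}{\eta(\ell^2 z)^r}\pmod\ell.$$
Applying $U_\ell$ and using the easily verified operator identity $\bigl(f(z)\,g(\ell z)\bigr)\big|U_\ell=\bigl(f|U_\ell\bigr)(z)\,g(z)$ with $g(z)=\eta(\ell z)^{-r}$, and finally $\eta(\ell z)^r\equiv\eta(z)^{r\ell}\pmod\ell$ together with $\Delta=\eta^{24}$, produces the displayed congruence
$$f_{r,\ell,0}\;\equiv\;\frac{\Delta^{r(\ell^2-1)/24}\big|U_\ell}{\eta^{r\ell}}\pmod\ell.$$

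For the modularity assertion, note that $24\mid\ell^2-1$ for every prime $\ell\ge 5$, so $\Delta^{r(\ell^2-1)/24}\in M_k$ with $k\defeq r(\ell^2-1)/2\ge 12$ is a level-one holomorphic modular form of trivial character. The key observation is that although $U_\ell$ a priori raises the level to $\ell$, on level-one forms of weight $k\ge 2$ the Hecke operator decomposes as $T_\ell=U_\ell+\ell^{k-1}V_\ell$, so that
$$\Delta^{r(\ell^2-1)/24}\big|U_\ell\;\equiv\;\Delta^{r(\ell^2-1)/24}\big|T_\ell\pmod\ell,$$
and the right-hand side still lies in $M_k$ (level one, trivial multiplier). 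Dividing by $\eta^{r\ell}\in M_{r\ell/2}(\nu_\eta^{r\ell})$, which is nonvanishing on $\mathbb{H}$, then places the quotient in $M^!_{k-r\ell/2}(\nu_\eta^{-r\ell})=M^!_{k_{r,\ell,0}}(\nu_\eta^{-r\ell})$, as asserted.

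The only substantive obstacle is the recognition that the level drops from $\ell$ back to $1$ upon reduction modulo $\ell$, effected by trading $U_\ell$ for the level-preserving Hecke operator $T_\ell$. Everything else is routine bookkeeping with multiplier systems, $\eta$-identities, and the standard $U_\ell/V_\ell$-operator relations collected in Section~2.
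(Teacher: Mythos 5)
Your proposal is correct and follows essentially the same route as the paper: both arguments rest on the generating-function identity $\eta^{-r}=\sum p_r\!\left(\frac{m+r}{24}\right)q^{m/24}$, the congruence $(1-q^n)^\ell\equiv 1-q^{\ell n}\pmod\ell$ (equivalently $\eta(z)^\ell\equiv\eta(\ell z)$), the factorization $\bigl(f(z)g(\ell z)\bigr)\big|U_\ell=(f|U_\ell)\,g$, and the replacement of $U_\ell$ by $T_\ell$ modulo $\ell$ to land in level one before dividing by the nonvanishing form $\eta^{r\ell}$. The only difference is organizational — you start from the exact identity $f_{r,\ell,0}=\eta^{-r}|U_\ell$ and work toward $\Delta^{r(\ell^2-1)/24}$, whereas the paper runs the same computation in the opposite direction — so no substantive comparison is needed.
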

\begin{proof}
Let $a(n)$ be the coefficients of $ \Delta^{\left( \frac{\ell^2 -1 }{24} \right) r}$. Then we have
\begin{align*}
\sum_n a(n) q^n &=: \Delta^{\left( \frac{\ell^2 -1 }{24} \right) r} \\
&= \eta^{r (\ell^2-1)} \\
&= \left( \prod_n (1- q^n)^{\ell^2 r} \right) \cdot q^{r\ell^2/24} \eta^{-r} \\
&= \left( \prod_n (1- q^n)^{\ell^2 r} \right) \cdot \sum_n p_r(n)q^{n + r \left( \frac{\ell^2 -1 }{24} \right)} \\
&= \left( \prod_n (1- q^n)^{\ell^2 r} \right) \sum_{n} p_r \left(n - r \left(\frac{\ell^2-1}{24}\right) \right) q^n.
\end{align*}
Applying the $U_{\ell}$ operator, we obtain
\begin{align*}
\sum_n a(\ell n) q^n &=: \Delta^{\left( \frac{\ell^2 -1 }{24} \right) r} | U_{\ell} \\
&\equiv (1 - q^n)^{\ell r} \sum_n p_r \left( \ell n - \left(\frac{\ell^2-1}{24} \right) r \right) \pmod{\ell}.
\end{align*}

Multiplying both sides of the last relation by $\eta^{-r\ell}$, we obtain
\begin{align*}
 \frac{ \Delta^{r (\ell^2 -1 )/24} | U_{\ell} }{\eta^{\ell r}} &\equiv \sum p_r (\ell n - r(\ell^2-1)/24 ) q^{n - (r\ell/24)}  \pmod{\ell} \\
&\equiv \sum p_r(\ell n - r(\ell^2-1)/24) q^{\frac{24 n - r\ell}{24}}  \pmod{\ell}  \\
&\equiv \sum_N p_r \left( \frac{\ell N + r}{24} \right) q^{N/24}  \pmod{\ell} \\
&\equiv f_{r,\ell,0} \pmod{\ell} .
\end{align*}

Finally, we observe that $\Delta^{\frac{r(\ell^2 - 1)}{24}} | U_{\ell} \equiv \Delta^{\frac{r(\ell^2 - 1)}{24}} | T_{\ell} \in M_{\frac{r(\ell^2 - 1)}{2}}$ and $\eta^{r\ell} \in S_{r\ell/2} (1, \nu_{\eta}^{r\ell})$, and it follows that their ratio lies in $M_{ \frac{r(\ell^2 - 1) - r\ell}{2}}^!(1, \nu^{-r\ell})$. 
\end{proof}

For $f_{r,\ell,-1}$, we have a similar result.

\begin{lem}\label{thm:genfcn1}
We assume $\ell \ge 5$ is prime, and that $r$ is an odd integer co-prime to $\ell$. Then 
$$
f_{r,\ell,-1} \in M_{k_{r,\ell,-1}}^! (1, \nu_{\eta}^{-r} ),
$$
where
$$
k_{r,\ell,-1}= \frac{r \ell (\ell^2- \ell - 1)}{2}.
$$

\end{lem}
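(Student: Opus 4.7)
The plan is to imitate the strategy of Lemma~\ref{thm:genfcn0}: view $f_{r,\ell,-1}$ as the sieve of $\eta^{-r}$ to exponents $N$ with $\left(\frac{-rN}{\ell}\right)=-1$, decompose this sieve using the quadratic character $\chi_\ell=\left(\frac{\cdot}{\ell}\right)$, and show each resulting piece is congruent modulo $\ell$ to an element of $M_{k_{r,\ell,-1}}^!(1,\nu_\eta^{-r})$. Setting $\varepsilon:=-\left(\frac{-r}{\ell}\right)\in\{\pm 1\}$ and applying the indicator identity $\mathbf{1}[\chi_\ell(N)=\varepsilon]=\tfrac12(\chi_\ell(N)^2+\varepsilon\chi_\ell(N))$ to the Fourier coefficients of $\eta^{-r}$ yields the decomposition
\[
2f_{r,\ell,-1}\;=\;\eta^{-r}\;-\;f_{r,\ell,0}\big|V_\ell\;+\;\varepsilon\bigl(\eta^{-r}\otimes\chi_\ell\bigr),
\]
using that $\eta^{-r}\big|U_\ell=f_{r,\ell,0}$ and that the restriction of $\eta^{-r}$ to exponents with $\ell\mid N$ equals $(\eta^{-r}\big|U_\ell)\big|V_\ell$.

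Next I would treat each summand separately. For $\eta^{-r}\in M_{-r/2}^!(1,\nu_\eta^{-r})$, multiplying by $E_{\ell-1}^{r(\ell^2-1)/2}$ — which is $\equiv 1\pmod\ell$ — raises the weight to $k_{r,\ell,-1}$ without changing the mod-$\ell$ Fourier expansion. For $f_{r,\ell,0}\big|V_\ell$, Lemma~\ref{thm:genfcn0} supplies $G_0\in M_{k_{r,\ell,0}}^!(1,\nu_\eta^{-r\ell})$ with $f_{r,\ell,0}\equiv G_0\pmod\ell$; by Frobenius, $f_{r,\ell,0}\big|V_\ell\equiv G_0\big|V_\ell\equiv G_0^\ell\pmod\ell$, and the $\ell$-th power $G_0^\ell$ lies in $M_{\ell k_{r,\ell,0}}^!(1,\nu_\eta^{-r\ell^2})=M_{k_{r,\ell,-1}}^!(1,\nu_\eta^{-r})$, using $\ell k_{r,\ell,0}=k_{r,\ell,-1}$ and $\ell^2\equiv 1\pmod{24}$.

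For the twist $\eta^{-r}\otimes\chi_\ell$, Euler's criterion gives $\chi_\ell(N)\equiv N^{(\ell-1)/2}\pmod\ell$, so
\[
\eta^{-r}\otimes\chi_\ell\;\equiv\;(24\Theta)^{(\ell-1)/2}\eta^{-r}\pmod\ell,
\]
where $\Theta=q\tfrac{d}{dq}$. The mod-$\ell$ theta operator theory — based on Ramanujan's identity $12\Theta E_k=k(E_2E_k-E_{k+2})$ together with the Kummer congruence $E_{\ell+1}\equiv E_2\pmod\ell$ — shows $\Theta$ takes level-one mod-$\ell$ forms to level-one mod-$\ell$ forms with weight increased by $\ell+1$; iterating $(\ell-1)/2$ times and then multiplying by a suitable power of $E_{\ell-1}$ produces an element of $M_{k_{r,\ell,-1}}^!(1,\nu_\eta^{-r})$ congruent to $\eta^{-r}\otimes\chi_\ell\pmod\ell$. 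Assembling the three representatives and dividing by $2$ (a unit mod $\ell$ for $\ell\ge 5$) gives the claim.

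The main obstacle is the twist term: naively $\eta^{-r}\otimes\chi_\ell\in M_{-r/2}^!(\ell^2,\nu_\eta^{-r})$ has level $\ell^2$, and reducing it mod~$\ell$ to level one is precisely what forces the dramatic weight jump from $-r/2$ up to $k_{r,\ell,-1}=\ell\cdot k_{r,\ell,0}$. The delicate ingredient is thus the theta operator argument in the half-integer-weight setting with multiplier $\nu_\eta^{-r}$; the other two pieces are direct analogs of the computation in Lemma~\ref{thm:genfcn0}.
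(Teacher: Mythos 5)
Your decomposition and all three weight computations check out, so the proof is correct; it is the same underlying mechanism as the paper's but packaged differently, and the difference is worth flagging. You sieve $\eta^{-r}$ directly, writing $2f_{r,\ell,-1}=\eta^{-r}-f_{r,\ell,0}\big|V_\ell+\varepsilon\,(\eta^{-r}\otimes\chi_\ell)$ and handling each piece in the half-integral-weight, weakly holomorphic setting; the paper instead clears denominators first, working with the honest cusp form $\Delta^{r(\ell^2-1)/24}=\eta^{r\ell^2}\cdot\eta^{-r}$, realizing the twist as $\Theta^{(\ell-1)/2}\Delta^{r(\ell^2-1)/24}$, and only dividing by $\eta^{r\ell^2}$ at the very end. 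The payoff of the paper's ordering is that the theta operator is only ever applied to an integral-weight, level-one \emph{holomorphic} form, so Serre's filtration bound $w(f|\Theta)\le w(f)+\ell+1$ (Lemma \ref{thm:filtration}) applies verbatim as a citation. Your ordering forces you to apply $\Theta$ to $\eta^{-r}\in M^!_{-r/2}(1,\nu_{\eta}^{-r})$, which requires the extension of the weight-raising fact to half-integral weight weakly holomorphic forms with eta multiplier; you correctly identify this as the delicate step, and your sketch via the Serre derivative $\Theta f-\tfrac{k}{12}E_2f\in M^!_{k+2}(1,\nu_{\eta}^{-r})$ together with $E_2\equiv E_{\ell+1}\pmod{\ell}$ is valid (the $E_2$ quasi-modularity argument is insensitive to the weight and multiplier, and $24$ is invertible mod $\ell$), but in a written version you should state and prove that generalization rather than appeal to "theta operator theory," since the paper's quoted lemma does not literally cover it. A side benefit of your version is that the identity $2f_{r,\ell,-1}=\eta^{-r}-f_{r,\ell,0}\big|V_\ell+\varepsilon\,(\eta^{-r}\otimes\chi_\ell)$ is exact rather than only a congruence, and replacing $f_{r,\ell,0}\big|V_\ell$ by $G_0^\ell$ with $\ell k_{r,\ell,0}=k_{r,\ell,-1}$ and $\nu_{\eta}^{-r\ell^2}=\nu_{\eta}^{-r}$ makes the weight bookkeeping transparent.
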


\begin{proof}
Let $a(n)$ be as in the previous proof. Then we have
\begin{align*}
\left(\frac{-r}{\ell} \right)   \Theta^{\frac{\ell -1}{2}} \Delta^{r \left( \frac{ \ell^2 -1}{24} \right)} &\equiv  \left(\frac{-r}{\ell} \right) \Delta^{r \left( \frac{ \ell^2 -1}{24} \right)} \otimes \chi_{\ell} \pmod{\ell} \\
 &\equiv \left( \prod_n (1- q^n)^{\ell^2 r} \right) \sum_{n} \left( \frac{-rn}{\ell} \right) p_r \left(n - r \left(\frac{\ell^2-1}{24}\right) \right) q^n \pmod{\ell} .\\
 \end{align*}

Then it follows that
\begin{align*}
& \Delta^{\frac{r (\ell^2-1)}{24}} - \left(\frac{-r}{\ell} \right)  \Theta^{\frac{\ell -1}{2}} \Delta^{r \left( \frac{ \ell^2 -1}{24} \right)} \\
&\equiv \left( \prod_n (1- q^n)^{\ell^2 r} \right) \sum_{n} \left( 1 - \left( \frac{-rn}{\ell} \right)\right) p_r \left(n - r \left(\frac{\ell^2-1}{24}\right) \right) q^n \pmod{\ell} \\
 &\equiv \eta^{\ell^2 r} q^{-r\ell^2/24} \left( \sum_{\ell | n} p_r \left(n - r \left(\frac{\ell^2-1}{24}\right) \right) q^n + 2 \sum_{\left( \frac{-r n}{\ell} \right)= -1} p_r \left(n - r \left(\frac{\ell^2-1}{24}\right) \right) q^n \right) \pmod{\ell} \\
 &\equiv \eta^{\ell^2 r} ((f_{r,\ell,0})^{\ell} + 2 f_{r,\ell,-1})  \pmod{\ell}.
\end{align*}
Now we have
\begin{equation}\label{eq:congforlemma}
2 f_{r,\ell,-1} \equiv  - (f_{r,\ell,0})^{\ell}  + \eta^{-r\ell^2} (  \Delta^{\frac{r (\ell^2-1)}{24}}  + \left(\frac{-r}{\ell} \right)  \Theta^{\frac{\ell -1}{2}} \Delta^{r \left( \frac{ \ell^2 -1}{24} \right)}) \pmod{\ell}.
\end{equation}

From Lemma \ref{thm:genfcn0}, $f_{r,\ell,0}^{\ell}$ is congruent to a form in $f_{\frac{r\ell(\ell^2-1)}{2} - \frac{r\ell^2}{2}}(\nu_{\eta}^{-r})$.
By the second part of Lemma \ref{thm:filtration}, we have
$$
w\left( \Theta^{\frac{\ell -1}{2}} \Delta^{r \left( \frac{ \ell^2 -1}{24} \right)} \right) =  \frac{ r(\ell^2 -1)}{2} + \frac{(\ell -1)}{2} (\ell+1) = \frac{(r+1)(\ell^2-1)}{2}.
$$

Since $E_{\ell-1} \equiv 1 \pmod{\ell}$, the weight of any term in (\ref{eq:congforlemma}) can be increased by a multiple of $\ell -1$.  It follows that the right hand side of \ref{eq:congforlemma} is congruent to a weakly holomorphic form of weight $\frac{r \ell (\ell^2-1)}{2} - \frac{r\ell^2}{2} = k_{r,\ell,-1}$.
\end{proof}

The following result will also be useful. 
\begin{lem}\label{thm:genfcn-1}
We assume $\ell \ge 5$ is prime, and that $r$ is an odd integer co-prime to $\ell$.  Additionally, we assume that for $\delta = 0$ and $\delta=1$, either $f_{r,\ell,\delta} \equiv 0 \pmod{\ell}$ or $\ord_{\ell}(f_{r,\ell,\delta}) > 0$. Then $(f_{r,\ell,0})^{\ell} +2 f_{r,\ell,-1}$ is congruent to a modular form in $S_{\frac{\ell^2 - r -1}{2}}(\nu_{\eta}^{-r })$. 
\end{lem}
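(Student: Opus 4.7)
The plan is to start from the derivation in the proof of Lemma \ref{thm:genfcn1}, which yields
\begin{equation*}
(f_{r,\ell,0})^\ell + 2 f_{r,\ell,-1} \equiv \eta^{-r\ell^2}\left(\Delta^{r(\ell^2-1)/24} - \left(\frac{-r}{\ell}\right) \Theta^{(\ell-1)/2} \Delta^{r(\ell^2-1)/24}\right) \pmod{\ell}.
\end{equation*}
The key algebraic step is the exact identity $\eta^{\ell^2} = \eta\cdot\Delta^{(\ell^2-1)/24}$, valid since $24 \mid \ell^2 - 1$ for every prime $\ell \ge 5$, which gives $\eta^{-r\ell^2} = \eta^{-r}\Delta^{-r(\ell^2-1)/24}$.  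Using $E_{\ell-1}^{(\ell+1)/2} \equiv 1 \pmod{\ell}$ to equalize the weights of the two summands in the parenthetical expression, and choosing a characteristic zero lift $\tilde h \in M_{(r+1)(\ell^2-1)/2}(1)$ of $\Theta^{(\ell-1)/2}\Delta^{r(\ell^2-1)/24}$ that vanishes at $\infty$ to order at least $r(\ell^2-1)/24$ (possible via a Miller basis adjustment and the mod $\ell$ theory underlying Lemma \ref{thm:filtration}), I set $H := E_{\ell-1}^{(\ell+1)/2}\Delta^{r(\ell^2-1)/24} - \left(\frac{-r}{\ell}\right)\tilde h$.  Then $H$ vanishes at $\infty$ to order at least $r(\ell^2-1)/24$, so $H/\Delta^{r(\ell^2-1)/24}$ is an honest modular form in $M_{(\ell^2-1)/2}(1)$ and
\begin{equation*}
(f_{r,\ell,0})^\ell + 2 f_{r,\ell,-1} \equiv \eta^{-r}\cdot\frac{H}{\Delta^{r(\ell^2-1)/24}} \pmod{\ell}.
\end{equation*}

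Next I multiply both sides by $\eta^r$ to clear the $\eta$-pole.  The hypothesis that $\ord_\ell(f_{r,\ell,\delta}) > 0$ for $\delta \in \{0,-1\}$ (or the respective forms are $\equiv 0 \pmod{\ell}$) forces the mod $\ell$ $q$-expansion of $(f_{r,\ell,0})^\ell + 2 f_{r,\ell,-1}$ to be supported on $q^{n/24}$ with $n \ge 1$.  Together with the multiplier constraint $n \equiv -r \pmod{24}$ imposed by $\nu_\eta^{-r}$, this forces $n \ge 24 - (r \bmod 24)$; consequently after multiplying by $\eta^r$ the integer $q$-exponents $N$ appearing modulo $\ell$ all satisfy $N \ge \lfloor r/24 \rfloor + 1$.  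In other words, $H/\Delta^{r(\ell^2-1)/24}$ vanishes modulo $\ell$ at $\infty$ to order at least $\lfloor r/24 \rfloor + 1$.

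Expanding $H/\Delta^{r(\ell^2-1)/24}$ in the basis of $M_{(\ell^2-1)/2}(1)$ consisting of $E_{(\ell^2-1)/2}$ together with the Miller basis of $S_{(\ell^2-1)/2}(1)$ from Lemma \ref{thm:miller}, this high-order vanishing forces the Eisenstein coefficient and the first $\lfloor r/24 \rfloor$ Miller coefficients to be divisible by $\ell$.  Therefore $H/\Delta^{r(\ell^2-1)/24} \equiv G_0 \pmod{\ell}$ for some cusp form $G_0 \in S_{(\ell^2-1)/2}(1)$ with $\ord_\infty G_0 \ge \lfloor r/24 \rfloor + 1$.  Because $\eta^r$ is nonvanishing on $\mathbb{H}$ with a zero of order $r/24$ at $\infty$, the quotient $G_0/\eta^r$ lies in $S_{(\ell^2-r-1)/2}(\nu_\eta^{-r})$, and dividing the preceding congruence by $\eta^r$ yields $(f_{r,\ell,0})^\ell + 2 f_{r,\ell,-1} \equiv G_0/\eta^r \pmod{\ell}$, as desired.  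The main obstacle is the careful order-of-vanishing bookkeeping: selecting $\tilde h$ with the right vanishing, tracking how multiplication by $\eta^r$ shifts $q$-exponents, and using the Miller basis argument to recover a cusp form of exactly the stated weight rather than merely a weakly holomorphic form.
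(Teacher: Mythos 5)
Your proposal is correct and follows essentially the same route as the paper: both start from the identity established in the proof of Lemma \ref{thm:genfcn1}, insert $E_{\ell-1}^{(\ell+1)/2}\equiv 1\pmod\ell$ to equalize weights, use the order-of-vanishing hypotheses together with a Miller-basis adjustment to replace the numerator by a genuine cusp form vanishing to sufficiently high order, and then divide by a power of $\eta$. Your factoring of $\eta^{-r\ell^2}$ as $\eta^{-r}\Delta^{-r(\ell^2-1)/24}$, so that the Miller-basis step happens in $M_{(\ell^2-1)/2}$ rather than $S_{(r+1)(\ell^2-1)/2}$, is only a cosmetic reorganization (the paper divides by $\eta^{r\ell^2}$ in one step), and your more explicit bookkeeping of the $q$-exponents is consistent with the paper's claim that the numerator vanishes mod $\ell$ to order greater than $r\ell^2/24$.
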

\begin{proof}
The previous proof shows
$$
(f_{r,\ell,0})^{\ell} +2 f_{r,\ell,-1} \equiv \eta^{-r\ell^2} ( E_{\ell-1}^{\frac{\ell+1}{2}} \Delta^{\frac{r (\ell^2-1)}{24}}  -\left(\frac{-r}{\ell} \right)  \Theta^{\frac{\ell -1}{2}} \Delta^{r \left( \frac{ \ell^2 -1}{24} \right)}) \pmod{\ell} . 
$$
Using the assumptions on the order of vanishing of $f_{r,\ell,0}$ and $f_{r,\ell,-1}$ modulo $\ell$, the Fourier expansion of the right hand side is supported only on positive indices, so the order of vanishing of $E_{\ell-1}^{\frac{\ell+1}{2}} \Delta^{\frac{r (\ell^2-1)}{24}}  - \left(\frac{-r}{\ell} \right)  \Theta^{\frac{\ell -1}{2}} \Delta^{r \left( \frac{ \ell^2 -1}{24} \right)}$ modulo $\ell$ is greater than $r \ell^2/24$.

Using the Miller basis, we obtain that $ E_{\ell-1}^{\frac{\ell+1}{2}} \Delta^{\frac{r (\ell^2-1)}{24}} -  \left(\frac{-r}{\ell} \right)  \Theta^{\frac{\ell -1}{2}} \Delta^{r \left( \frac{ \ell^2 -1}{24} \right)}$ is congruent to a form in $S_{\frac{(r+1)(\ell^2-1)}{2}}$ with order of vanishing greater than $\frac{r \ell^2}{24}$. It follows that $(f_{r,\ell,0})^{\ell} +2 f_{r,\ell,-1} $ is congruent modulo $\ell$ to a form in $S_{\frac{(r+1)(\ell^2-1)}{2} - \frac{r \ell^2}{2}} (\nu_{\eta}^{-r \ell^2 })= S_{\frac{\ell^2-r-1}{2}} (\nu_{\eta}^{-r})$. 
\end{proof}

\section{Theorem \ref{thm:0class}}
Theorem \ref{thm:scarcity} relies on the restriction given by Theorem \ref{thm:0class}, so we prove it first.
\begin{proof}

We follow the method of Proposition 3.2 of \cite{ABR}. We set $m' = Q_1 \cdots Q_s$ for some primes $Q_j$. Let $\delta = \lp  \frac{r (r-24t)}{\ell} \rp$, and set $\epsilon_i := \lp \frac{r(r -24t)}{Q_i} \rp$ for $1 \le i \le s $. 

For $1 \le i \le s$, let $Y_i$ be the operator given by
$$
Y_i := (1 + (\mbox{ } \otimes \chi_{Q_i}) \epsilon_i - U_{Q_i}V_{Q_i}). 
$$
We let 
$$
g := f_{r, \ell, \delta} | Y_1 Y_2 \cdots Y_s.
$$
Then (\ref{eq:umdef}) and (\ref{eq:twistdef}) imply
$$
g \equiv \sum_{\substack{  \lp  \frac{- rN}{\ell} \rp = \delta \\  \lp  \frac{- rN}{Q_j} \rp = \epsilon_j, 1 \le j \le s }} p_r \left(\frac{N + r}{24} \right) q^{\frac{N}{24}} \pmod{\ell}. 
$$
It follows from \ref{eq:uqaction} and \ref{eq:quadtwist} that $g \in M_{k_{r,\ell,\delta}}^! ( m'^2, \nu_{\eta}^{-r})$, with $k_{r,\ell,\delta}$ as in Propositions \ref{thm:genfcn0} and \ref{thm:genfcn1}.  By Proposition \ref{thm:squareclasses}, the congruence (\ref{eq:cong8}) is equivalent to the congruence
$$
g \big| U_{m''} \equiv 0 \pmod{\ell}. 
$$

Since $r$ is odd, it follows from Lemma 3.3 of \cite{ABR} that $g \big| U_{m''} \equiv 0 \pmod{\ell}$ holds if and only if $g \equiv 0 \pmod{\ell}$.  Note that Lemma 3.3 is stated with the assumptions that $(r, 3) =1$ and that $f$ is a holomorphic modular form, but the proof never uses either. 
\end{proof}

\section{Theorem \ref{thm:scarcity}}
This is a finite calculation, but due to the rapid growth of the functions $p_r(n)$, the time required for the larger values $\ell$ and $m$ would obstruct a brute-force approach. We modify an algorithm used in \cite{ABR} and provide a description here for completeness. The Mathematica \cite{Mathematica} code used is on the first author's webpage, see the files ``computepartitions.nb" and ``rulingoutcongruencese.nb".

For each odd $r$ between $1$ and $24$, $10^5$ values of $p_r(n)$ were computed using the following recursive formula (see \cite{MR151446}) :
    $$p_r(n) = \frac{r}{n}  \sum_{j=0}^{n-1}p_r(j) \sigma (n-j),$$
where $\sigma (n-j)$ is the sum of positive divisors of $n-j$. 

Fix a positive integer $1 < r <24$ and $\delta \in \{0,-1\}$. We verify the claim of Theorem \ref{thm:scarcity} for $r$ and $\delta$ as follows:

For each prime $5 \le \ell \le 6133$, first we check whether there is a Ramanujan-type congruence for $\ell$ and $r$. Then to search for theta-type congruences, we compute a list $tvalues$ of positive integers $t$ such that $\left( \frac{r (r-24t)}{\ell} \right)= \delta$ and $p_r( t) \not\equiv 0 \pmod{\ell}$. Fsor each prime $m$ such that $5 \le m \le 6133$, check that $tvalues$ contains at least one value $t_{\epsilon}$ with $\left( \frac{r (r-24t_{\epsilon})}{m} \right)= \epsilon$ for both $\epsilon = 1$ and $\epsilon = -1$. If it does, then by Theorem \ref{thm:0class} and Proposition \ref{thm:squareclasses}, we can conclude that (\ref{eq:squarefree}) does not hold for this value of $m$ with any $t$ such that $\left(\frac{r(r-24t)}{\ell} \right) = \delta$. 


These steps were carried out for each $r $ and $\delta \in \{ 0,1 \}$. For each $r$, the only instances of (\ref{eq:squarefree}) are those given in the table. 

\section{Theorems \ref{thm:etafamily}, \ref{thm:main}, and \ref{thm:abnormal}}

\subsection{Spaces of cusp forms}

\begin{lem}\label{thm:1dspaces}
Let $k \le 24$ and $n < 24$ be non-negative integers. Then
 \begin{equation*}
    S_{\frac{k}{2}}(\nu_\eta^n) \equiv \begin{cases} \{0\} & n\not \equiv k \pmod{4} \mbox{ or }  n >  k \mbox{ or } k=n+4 \\
     \mathbb{C} \eta^k(z) & k =n \\
     \mathbb{C}  E_{\frac{k-n}{2}}\eta^n(z): &k > n+4  \mbox{ and }  k\equiv n \pmod{4}  \end{cases}
\end{equation*}
\end{lem}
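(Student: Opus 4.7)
The plan is to reduce the classification to a dimension computation in the ring of level-one, integer-weight modular forms by dividing out $\eta^n$. Given $f\in S_{k/2}(\nu_\eta^n)$, I would set $g:=f/\eta^n$ and show $g\in M_{(k-n)/2}$; after this reduction, the structure of these classical spaces dictates the three cases of the lemma.

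The verification that $g\in M_{(k-n)/2}$ has three parts. Holomorphy on $\H$ is automatic since $\eta$ is non-vanishing there. The weight is $k/2-n/2=(k-n)/2$ and the multipliers cancel via $\nu_\eta^n\nu_\eta^{-n}=1$, so $g$ satisfies \eqref{eq:functionalequation} with trivial character. For holomorphy at the cusp, the Fourier expansion of $f\in S_{k/2}(\nu_\eta^n)$ is supported on $q^{m/24}$ with $m\equiv n\pmod{24}$ and $m>0$; hence $m\ge n$ when $n\ge 1$ and $m\ge 24$ when $n=0$. Dividing by $\eta^n=q^{n/24}\prod_{j\ge 1}(1-q^j)^n$ then gives $g$ a Fourier expansion in non-negative integer powers of $q$.

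Next, I would argue that $M_w$ (integer weight, level one, trivial multiplier) vanishes unless $w$ is a non-negative even integer: negative weight kills $g$; for $-I\in\SL_2(\Z)$ acting trivially on $\H$, the slash action produces a scalar $(-1)^{-w}$ (with principal branch) that equals $1$ only when $w\in 2\Z_{\ge 0}$, ruling out odd-integer and half-integer weights. This forces $g=0$, hence $f=0$, whenever $n>k$ or $n\not\equiv k\pmod 4$, producing the first case of the lemma. In the remaining cases with $k\ge n$ and $k\equiv n\pmod 4$, the value $(k-n)/2$ lies in $\{0,2,4,6,8,10,12\}$ under the hypothesis $k\le 24$, and \eqref{eq:dimensionformula} determines: $\dim M_0=1$ spanned by $1$, giving $f=c\eta^k$ when $k=n$; $\dim M_2=0$, giving the vanishing case $k=n+4$; and $\dim M_w=1$ spanned by $E_w$ for $w\in\{4,6,8,10\}$, yielding $f=cE_{(k-n)/2}\eta^n$.

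The main anticipated obstacle is the boundary value $(k-n)/2=12$, which arises only at $(n,k)=(0,24)$: here $\dim M_{12}=2$, and one must invoke the cusp condition on $f$ to single out $\C\Delta=\C\eta^{24}$ rather than the non-cuspidal Eisenstein generator $E_{12}$. This identification lines up with the $k=n$ pattern at $n=24$ rather than with the Eisenstein pattern, and a careful statement of the lemma should account for it. Aside from this edge case, the argument is essentially bookkeeping once the reduction $f\mapsto f/\eta^n$ is justified.
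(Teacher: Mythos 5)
Your proof is correct, and it is essentially the mirror image of the paper's argument: you divide by $\eta^n$ to land in $M_{(k-n)/2}$ with trivial multiplier, whereas the paper multiplies by $\eta^{24-n}$ to land in $S_{12+(k-n)/2}$ and reads the answer off the dimension of that cusp space. The trade-offs are minor. Your route must justify holomorphy at the cusp after division, which you do correctly using that a form with multiplier $\nu_\eta^n$ has expansion supported on exponents $\equiv n \pmod{24}$, and must kill odd and half-integral weights by hand via the $-I$ computation; the paper's route gets the cusp condition for free (a product of cusp forms is a cusp form) but relies just as silently on the vanishing of those same odd and half-integral weight level-one spaces. Your flag on the boundary case $(n,k)=(0,24)$ is a genuine catch rather than an obstacle: there $(k-n)/2=12$ and $S_{12}(\nu_\eta^0)=\mathbb{C}\Delta$, not $\mathbb{C}E_{12}$ as the third line of the lemma would read, and the paper's own proof slips at the same point by asserting that $S_r$ is one-dimensional when $r=24$, where in fact $\dim S_{24}=2$. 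Since the rest of the paper only ever invokes the $k=n$ instances (namely $S_{1/2}(\nu_\eta)$ and $S_{3/2}(\nu_\eta^3)$), the slip is harmless, but the lemma's statement should exclude or correct the pair $(n,k)=(0,24)$ exactly as you suggest.
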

\begin{proof} Let $r=12+\frac{k-n}{2}$, then $24\geq r \geq 0$. For any $f \in S_{\frac{k}{2}}(\nu_\eta^n)$, we have  $f\eta^{24-n}(z)\in S_{r}$. If
$n \not\equiv k \pmod{4}$, then $r$ must be odd, in which case $S_r  = \{0 \}$. If $n > k$ then $r < 12$, which implies that $S_r  = \{0 \}$, and similarly when $k=n+2$ we have $S_r = S_2 = \{0\}$.  When $k = n$, $S_r  =  S_{12} = \mathbb{C} \eta^{24}$.  For the last case, suppose $k \equiv n \pmod{4}$ and $k >n$. Then $S_r$ is one dimensional by (\ref{eq:dimensionformula}), and we have  $S_r = \mathbb{C} E_{r -12}\Delta$. 

\end{proof}

\subsection{Proof of Theorem \ref{thm:etafamily}}
First we prove case (1). From Lemma \ref{thm:genfcn0}, we have
$$
f_{r,\ell,0} \equiv  \frac{\Delta^{\frac{r(\ell^2 - 1)}{24}} | U_{\ell}}{\eta^{r\ell}} \pmod{\ell}.
$$

First we explain why our assumptions imply that $f_{r,\ell,0}$ is congruent to a cusp form. The order of vanishing of the numerator is at least $\lceil \frac{ r(\ell^2-1)}{24 \ell} \rceil$, and the order of vanishing of the denominator is $\frac{r \ell }{24}$. We have
\begin{align*}
\Big\lceil \frac{ r(\ell^2-1)}{24 \ell} \Big\rceil &= \Big\lceil \frac{r \ell}{24} - \frac{r}{24\ell} \Big\rceil \\
&=  \Big\lceil \frac{r \ell}{24} - \frac{a}{24} +  \frac{a +1}{24\ell} \Big\rceil \\
&= \Big\lceil \frac{r\ell -a}{24} +  \frac{a +1}{24\ell} \Big\rceil \\
&\ge \Big\lceil \frac{r\ell}{24} \Big\rceil \\
&> \frac{r\ell}{24} .
\end{align*}

In the last two steps, we used the assumptions that $r \ell \equiv 23 \pmod{24}$ and $0 < a < 23$.

By Lemma \ref{thm:filtration}, the form $\Delta^{\frac{r(\ell^2 - 1)}{24}} | U_{\ell}$ is congruent to a form in $S_w$, where \begin{equation}\label{eq:wformula}
w = (\ell -1) \Big\lfloor \frac{1}{\ell -1} \cdot \left( \ell + \frac{r(\ell^2-1) - 2}{2\ell} \right) \Big\rfloor .
\end{equation} 

We compute $w$:

\begin{align*}
 \Big\lfloor \frac{1}{\ell -1} \cdot \left( \ell + \frac{r(\ell^2-1) - 2}{2\ell} \right) \Big\rfloor  &=  \Big\lfloor \frac{\ell}{\ell -1} + \frac{r (\ell+1)}{2 \ell} - \frac{1}{\ell(\ell-1)} \Big\rfloor \\
&= \Big\lfloor \frac{\ell^2-1}{\ell(\ell -1)} + \frac{r}{2} + \frac{r}{2\ell} \Big\rfloor \\
&= \Big\lfloor \frac{\ell + 1}{\ell } + \frac{a(\ell-1)}{2} - \frac{1}{2} + \frac{a}{2} - \frac{(a+1)}{2 \ell} \Big\rfloor \\
&= 1 +  \frac{a(\ell-1)}{2} + \frac{a-1 }{2} + \Big\lfloor \frac{1}{\ell }   - \frac{(a+1)}{2 \ell} \Big\rfloor \\
&= \frac{a\ell + 1}{2} + \Big\lfloor \frac{1-a}{2\ell} \Big\rfloor \\
&= \frac{a \ell - 1}{2}.
\end{align*}
In the last step, we used the assumption that $\ell \ge \frac{a-1}{2}$.

Now we have $4 | w$, in particular, $w \not\equiv 2 \pmod{12}$, so $\operatorname{dim} (S_w) = \Big\lfloor \frac{w}{12} \Big\rfloor$. We compute
$$
\Big\lfloor \frac{w}{12} \Big\rfloor = \Big\lfloor \frac{a \ell^2 - a \ell - \ell + 1}{24} \Big\rfloor 
$$
and 
$$
\frac{r \ell}{24} = \frac{a\ell^2 - a \ell - \ell}{24}.
$$
From $a \ell^2 - a \ell - \ell + 1 \equiv 0 \pmod{24}$ and $r \ell /24 \not\in \mathbb{Z}$, we deduce
$$
\Big\lfloor \frac{w}{12} \Big\rfloor > \frac{r \ell}{24}.
$$
Using the Miller basis, there exists a form in $S_w$ congruent to $\Delta^{\frac{r(\ell^2 - 1)}{24}} | U_{\ell}$ with order of vanishing greater than $\frac{r \ell}{24}$. 

We now have $f_{r,\ell,0} \in S_{w - \frac{r\ell}{2}} (\nu_{\eta}^{-r\ell}) = S_{w - \frac{r\ell}{2}} (\nu_{\eta})$. It follows from the calculation of $w$ above that $w - \frac{r \ell}{2} = \frac{1}{2}$.

Thus we've shown $f_{r,\ell,0} \in S_{\frac{1}{2}} (\nu_{\eta})$. By Lemma \ref{thm:1dspaces}, we have $f_{r,\ell,0} \equiv \alpha \eta \pmod{\ell}$ for some integer $\alpha$. Comparing the coefficients of $q^{1/24}$ in the Fourier expansions of $\eta$ and $f_{r,\ell,0}$, we deduce that we can take $\alpha = p_r( \frac{\ell + r}{24} )$.

The second case is basically identical to the first case. Once again, we have
$$
f_{r,\ell,0} \equiv  \frac{\Delta^{\frac{r(\ell^2 - 1)}{24}} | U_{\ell}}{\eta^{r\ell}} \pmod{\ell}.
$$
We can once again compute that under these assumptions, $f_{r,\ell,\delta}$ is congruent to a cusp form in $S_{w - (r\ell)/2}( \nu_{\eta}^{-r\ell})$, where $w$ is once again given by (\ref{eq:wformula}). Our assumptions imply that $-r \ell \equiv 3 \pmod{24}$, so that $f_{r,\ell,0} \in S_{\frac{3}{2}}( \nu_{\eta}^3)$.

\subsection{Proof of Theorem \ref{thm:main}}
Assume $r = r_1, \ell$ are as in the first and second columns of Table \ref{Tab:etacongs}. Then it is easy to verify that $r$ and $\ell$ satisfy the requirements of Theorem \ref{thm:etafamily}, so by Theorem \ref{thm:etafamily} and (\ref{eq:etaidentity}), we have
$$
\sum_N p_r \lp \frac{\ell N +r}{24} \rp q^{\frac{N}{24}} \equiv \alpha \sum_{n=1}^{\infty} \left( \frac{12}{n} \right) q^{n^2/24}.
 $$
For any $t$ for which $\ell \nmid p_r (t)$, we must have
 $$
 t = \frac{\ell n^2 +r }{24}
 $$
 for some integer $n$. For every prime $m \neq \ell$, we have
 $$
 -1 \neq \left( \frac{n^2}{m} \right) = \left( \frac{\ell}{m} \right) \left(\frac{24t -r}{m} \right).
 $$
On the other hand, if $-1 = \left( \frac{\ell}{m} \right) \left(\frac{24t -r}{m} \right)$, then we have $p_r(n) \equiv 0 \pmod{\ell}$ for all $n \equiv t \pmod{\ell m}$.

The other case is essentially identical. Assume $r = r_2, \ell$ are as in the first and third columns of Table \ref{Tab:etacongs}.Then by Theorem \ref{thm:main} and (\ref{eq:eta3identity}), we have
$$
\sum_N p_r \lp \frac{\ell N +r}{24} \rp q^{\frac{N}{24}} \equiv \alpha \sum_{n \ge1} \left( \frac{-4}{n} \right)n q^{3n^2/24} $$
for some integer $\alpha$. For any $t$ of the form $\frac{\ell n + r}{24}$ for which $\ell \nmid p_r (t)$, we must have
 $$
 t = \frac{3 \ell n^2 +r }{24}
 $$
 for some integer $n$. For any prime $m \nmid 3 \ell$, we have
 $$
 -1 \neq \left( \frac{n^2}{m} \right) = \left( \frac{3 \ell}{m} \right) \left(\frac{24t -r}{m} \right).
 $$
Thus, if $-1 = \left( \frac{3 \ell}{m} \right) \left(\frac{24t -r}{m} \right)$, then we have $p_r(n) \equiv 0 \pmod{\ell}$ for all $n \equiv t \pmod{\ell m}$.

 \subsection{Proof of Theorem \ref{thm:abnormal}}
 \ \\

\textbf{Case (1):} By Lemma \ref{thm:genfcn0} and Lemma \ref{thm:filtration}, we have $f_{r,\ell,0}$ congruent modulo $\ell$ to a form in $S_{\ell/2} (\nu_{\eta}^{-r\ell})$.  Let $\ell_0$ be the smallest non-negative integer with $\ell_0 \equiv \ell \pmod{24}$. Then we have $f_{r,\ell,0} \eta^{24 - \ell_0} \in S_{12 + \frac{\ell - \ell_0}{2}} $. The dimension of $S_{12 + \frac{\ell - \ell_0}{2}} $ is $d=1 + \frac{\ell-\ell_0}{24}$ by (\ref{eq:dimensionformula}). We write $f_{r,\ell,0} \eta^{24 - \ell_0}$ in terms of the Miller basis $f_1, f_2, \cdots, f_d$ for $S_{12 +  \frac{\ell - \ell_0}{2}}$:
$$
f_{r,\ell,0} \eta^{24 - \ell_0} \equiv \sum_{i=1}^d \alpha_i f_i \pmod{\ell}.
$$
By the condition that $ \operatorname{ord}_{\ell}(f_{r,\ell,0}) > \ell$, we have $\alpha_i \equiv 0 \pmod{\ell}$ for $1 \le i \le d-1$, so 
$$
f_{r,\ell,0} \eta^{24 - \ell_0} \equiv \alpha_d f_d \pmod{\ell}.
$$
We note that $f_d = \Delta^{1 +  \frac{\ell - \ell_0}{24}}$, and therefore $f_{r,\ell,0} \equiv \alpha_d \eta^{\ell} \pmod{\ell}$.

\textbf{Case (2):}  The conditions in Column A and Lemma \ref{thm:genfcn-1} guarantee  $ f_{r,\ell,0}^{\ell} + 2 f_{r,\ell,-1} \in S_{3 /2} (\nu_{\eta}^{-r})$. Since $f_{r, \ell, 0} \equiv 0 \pmod{\ell}$, we have $f_{r,\ell,-1} \in S_{3  /2} (\nu_{\eta}^{-r})$. The result follows from Lemma \ref{thm:1dspaces} and the assumption that $r \equiv -3 \pmod{24}$.

\textbf{Case (3):}
By Lemma \ref{thm:genfcn-1}, $2f_{r,\ell,-1} + f_{r,\ell,0}^{\ell} \in  S_{\frac{1}{2}} ( \nu_{\eta})$. By Lemma \ref{thm:1dspaces} we have
$$
2 f_{r,\ell,-1} + f_{r,\ell,0}^{\ell}  \equiv \alpha \eta \pmod{\ell}
$$
for some $\alpha \in \mathbb{Z}$, and by the assumption on $f_{r,\ell,0}$, we have
$$
2 f_{r,\ell,-1}  \equiv \beta \eta^{\ell^2}- \alpha \eta \pmod{\ell}.
$$
Since $f_{r,\ell,-1}$ is not supported on any multiples of $\ell$, we must have $\alpha \equiv \left( \frac{12}{\ell} \right) \beta \pmod{\ell}$, and therefore $f_{r,\ell,-1} \equiv \alpha \left( \left( \frac{12}{\ell} \right) \eta^{\ell^2} - \eta\right) \pmod{\ell}$.

\section{Examples and Future Directions}
Examples of congruences arising from Theorem \ref{thm:main} are straightforward to compute. In both cases of Theorem \ref{thm:main}, each of the pairs $(r,\ell)$ lie on one of eleven lines. This is shown for the first case of Theorem \ref{thm:main} in Figure \ref{fig:etaexamples}. 

\begin{figure}[h]
\caption{The 66 pairs $(r,\ell) \in [1, 501] \times [5,1583]$ in the first case of Theorem \ref{thm:etafamily}}\label{fig:etaexamples}
\includegraphics[width=8cm]{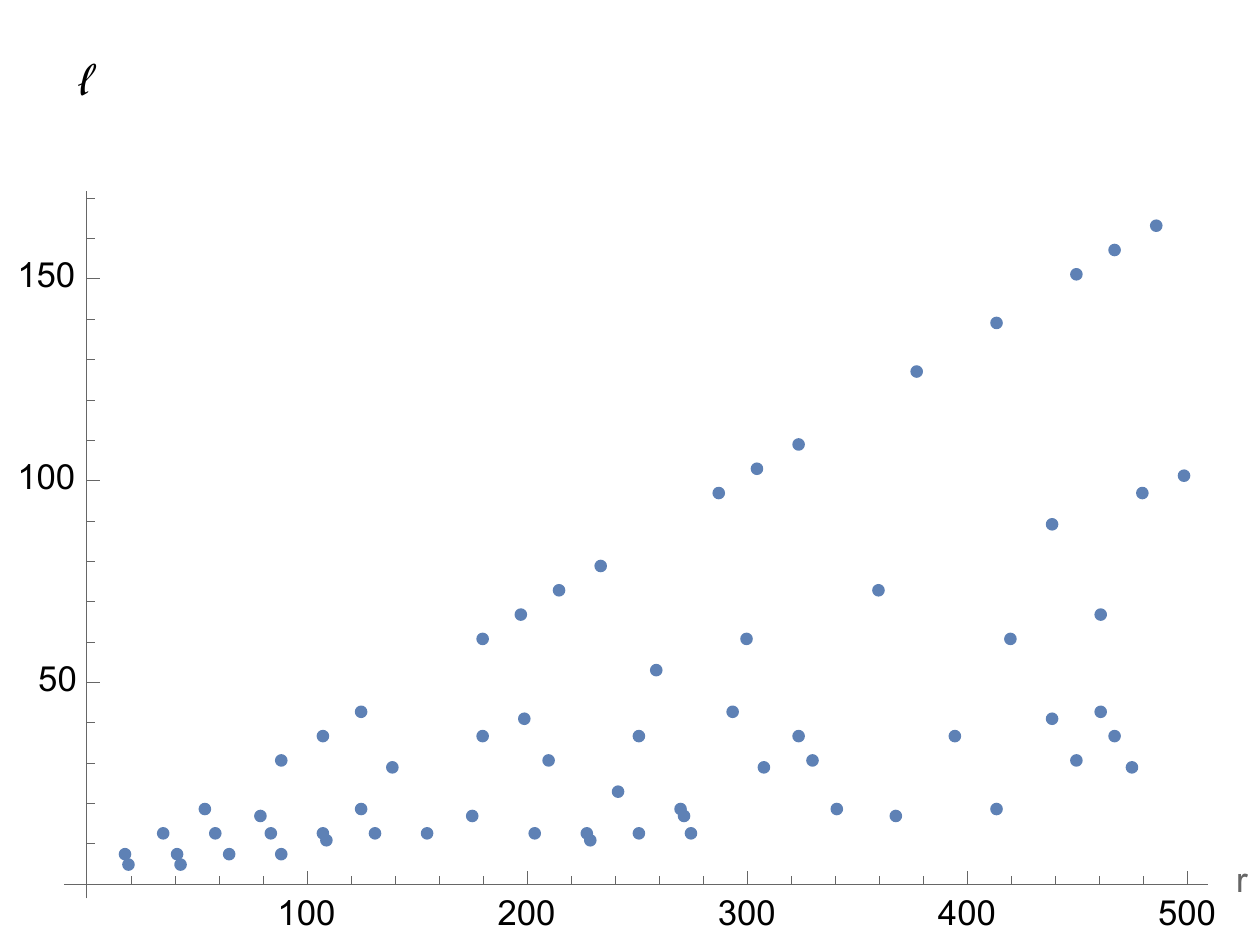}

\end{figure}

The congruences in Theorem \ref{thm:abnormal} are more elusive. Examples of case (1) of Theorem \ref{thm:abnormal} exhibit a degree of regularity - there appears to be exactly one pair of congruences for each $r \equiv 23 \pmod{24}$. Only a few examples of cases (2) and (3) - those given in Table \ref{Tab:abnormalexamples} - have been found after searching over $(r,\ell) \in [1, 701] \times [5,1583]$, and it is unclear whether these are the only examples of these types. We note also that there are examples of theta-type congruences that are not covered by any of our theorems, for example, one can use Lemma \ref{thm:genfcn-1} and properties of the space $S_{25/2} (\nu_{\eta})$ to show that $f_{23,7,-1}$ is a multiple of $\eta^{7^2} - \left( \frac{12}{7}\right) \eta$ modulo $7$, similar to Theorem \ref{thm:abnormal} (3). We leave the problem of better understanding these examples open for future investigation.




\bibliographystyle{amsalpha}
\bibliography{part_cong}

\providecommand{\bysame}{\leavevmode\hbox to3em{\hrulefill}\thinspace}
\providecommand{\MR}{\relax\ifhmode\unskip\space\fi MR }
\providecommand{\MRhref}[2]{%
  \href{http://www.ams.org/mathscinet-getitem?mr=#1}{#2}
}
\providecommand{\href}[2]{#2}
\begin{thebibliography}{ABR20}

\bibitem[AB03]{Ahlgren-Boylan}
S.~Ahlgren and M.~Boylan, \emph{Arithmetic properties of the partition
  function}, Invent. Math. \textbf{153} (2003), no.~3, 487--502. \MR{2000466}

\bibitem[ABR20]{ABR}
S.~Ahlgren, O.~Beckwith, and M.~Raum, \emph{Scarcity of congruences for the
  partition function}, 2020.

\bibitem[And14]{andersen}
N.~Andersen, \emph{Classification of congruences for mock theta functions and
  weakly holomorphic modular forms}, Q. J. Math. \textbf{65} (2014), no.~3,
  781--805. \MR{3261966}

\bibitem[AO01]{Ahlgren-Ono}
S.~Ahlgren and K.~Ono, \emph{Congruence properties for the partition function},
  Proc. Natl. Acad. Sci. USA \textbf{98} (2001), no.~23, 12882--12884.
  \MR{1862931}

\bibitem[Boy04]{boylan-exceptional}
M.~Boylan, \emph{Exceptional congruences for powers of the partition function},
  Acta Arith. \textbf{111} (2004), no.~2, 187--203. \MR{2039422}

\bibitem[Dic]{Dicks}
R~Dicks, \emph{Atkin-like congruences for $r$-colored partitions}.

\bibitem[DW17]{DawseyWagner}
M.L. Dawsey and I.~Wagner, \emph{Congruences for powers of the partition
  function}, Annals of Combinatorics \textbf{21} (2017), no.~1, 83--93.

\bibitem[Gan63]{MR151446}
J.~M. Gandhi, \emph{Congruences for {$p_{r}\,(n)$} and {R}amanujan's {$\tau $}
  function}, Amer. Math. Monthly \textbf{70} (1963), 265--274. \MR{151446}

\bibitem[Gor83]{gordon}
B.~Gordon, \emph{Ramanujan congruences for $p_{-k} \pmod{11^r}$}, Glasgow
  Mathematical Journal \textbf{24} (1983), no.~2, 107--123.

\bibitem[Inc]{Mathematica}
Wolfram~Research{,} Inc., \emph{Mathematica, {V}ersion 12.1}, Champaign, IL,
  2020.

\bibitem[Kno70]{knopp}
M.~I. Knopp, \emph{Modular functions in analytic number theory}, Markham
  Publishing Co., Chicago, Ill., 1970. \MR{0265287 (42 \#198)}

\bibitem[KO92]{KimingOlsson}
I.~Kiming and J.B. Olsson, \emph{Congruences like ramanujan's for powers of the
  partition function}, Arch. Math. \textbf{59} (1992), 348--360.

\bibitem[Kob]{koblitz}
N.~Koblitz, \emph{Introduction to elliptic curves and modular forms}, Graduate
  Texts in Mathematics, Springer-Verlag New York, Inc. 1993.

\bibitem[Ono00]{Ono_annals}
K.~Ono, \emph{Distribution of the partition function modulo {$m$}}, Ann. of
  Math. (2) \textbf{151} (2000), no.~1, 293--307. \MR{1745012}

\bibitem[Ono04]{CBMS}
\bysame, \emph{The web of modularity: arithmetic of the coefficients of modular
  forms and {$q$}-series}, CBMS Regional Conference Series in Mathematics, vol.
  102, Published for the Conference Board of the Mathematical Sciences,
  Washington, DC; by the American Mathematical Society, Providence, RI, 2004.
  \MR{2020489}

\bibitem[Ram21]{ram_cong_mathz}
S.~Ramanujan, \emph{Congruence properties of partitions}, Math. Z. \textbf{9}
  (1921), no.~1-2, 147--153. \MR{2061214}

\bibitem[SD73]{SwinnertonDyer}
H.~P.~F. Swinnerton-Dyer, \emph{On {$l$}-adic representations and congruences
  for coefficients of modular forms}, Modular functions of one variable, {III}
  ({P}roc. {I}nternat. {S}ummer {S}chool, {U}niv. {A}ntwerp, 1972), 1973,
  pp.~1--55. Lecture Notes in Math., Vol. 350. \MR{0406931}

\bibitem[Ser73]{Serre}
Jean-Pierre Serre, \emph{Formes modulaires et fonctions z\^{e}ta
  {$p$}-adiques}, Modular functions of one variable, {III} ({P}roc. {I}nternat.
  {S}ummer {S}chool, {U}niv. {A}ntwerp, 1972), 1973, pp.~191--268. Lecture
  Notes in Math., Vol. 350. \MR{0404145}

\bibitem[Ste]{Stein}
W.~Stein, \emph{Modular forms, a computational approach}, American Math
  Society.

\bibitem[Tre06]{Treneer_1}
S.~Treneer, \emph{Congruences for the coefficients of weakly holomorphic
  modular forms}, Proc. London Math. Soc. (3) \textbf{93} (2006), no.~2,
  304--324. \MR{2251155}

\end{thebibliography}
\end{document}